\theoremstyle{plain}
\newtheorem{theorem}{Theorem}
\newtheorem{lemma}[theorem]{Lemma}
\newtheorem{corollary}[theorem]{Corollary}
\theoremstyle{plain} % just in case the style had changed
\newcommand{\thistheoremname}{}
\newtheorem*{genericthm*}{\thistheoremname}
\newenvironment{namedthm*}[1]
  {\renewcommand{\thistheoremname}{#1}%
   \begin{genericthm*}}
  {\end{genericthm*}}
\theoremstyle{definition}
\newtheorem{definition}[theorem]{Definition}
\newcommand{\lra}{\longrightarrow}
\newcommand{\Lra}{\Longrightarrow}
\newcommand{\mbf}{\mathbf}
\newcommand{\mbb}{\mathbb}
\renewcommand{\maketitle}{
\begin{center}
%\nline\\
\vspace{1ex}
{\huge \textsc{\@title}}\\
%\nline
\vspace{2ex}
{\large\textsc{\@author}}
%\vspace{1ex}
\end{center}
}
\author{Deepisha Solanki}
\begin{document}

\title{Studying links via plats: the unlink}

\maketitle

%\date{$4^{th}$ April, 2022}

\begin{abstract}

\textbf{Abstract.} Our main result is a version of Birman's theorem about equivalence of plats, which does not involve stabilization, for the unlink. We introduce the {\em pocket} and {\em flip moves}, which modify a plat without changing its link type or bridge index. Theorem 1 shows that using the {\em pocket} and {\em flip moves}, one can simplify any closed $n$-plat presentation of the unknot to the standard 0-crossing diagram of the unknot, through a sequence of plats of \textbf{non-increasing} bridge index. The theorem readily generalises to the case of the unlink.

\end{abstract}

\section{\centering Introduction}

In this paper we focus on plat presentations of non-oriented links in $\mathbb{R}^3$.  In particular, we give a procedure for the {\em monotonic} simplification, with respect to bridge index, of any $k$-bridge plat presentations of the $n$-component unlink (with $k \geq n$) to the standard (i.e. trivial $2n$ braid) $n$-bridge plat presentation. This procedure introduces two new bridge index preserving plat isotopies: the {\em flip move} and the {\em pocket move}. Both of these isotopies/moves have the attributes that they take plats to plats and that they preserve bridge index. (See \S \ref{dcms} and \S \ref{flipmove} for formal definitions). Our procedure couples these two isotopies with the classical ones of {\em braid isotopy}, stabilization/destabilization and {\em double coset moves} (these moves are defined in \S \ref{braid_isotopies}, \S \ref{stab_destab} and \S \ref{dcms} respectively) for our main result which is specialized to the unknot:

\begin{theorem} Let $\mbf{K}$ be any 2n-plat presentation of the unknot.  Let $\mathbf{U}$ be the standard 1-bridge, zero-crossing diagram of the unknot. Then, there exists a finite sequence of plat presentations of the unknot:
$$ \mathbf{K} = \mathbf{K_{0}} \lra \mathbf{K_{1}} \lra \mathbf{K_{2}} \lra ... \lra \mathbf{K_{m}} = \mathbf{U} $$
such that $\mathbf{K_{i+1}}$ is obtained from $\mathbf{K_{i}}$ via the following \textbf{non index increasing} moves:

\begin{itemize}
\item[(i)] a braid isotopy on the 2n strands;
\item[(ii)] destabilization;
\item[(iiI)] the pocket move;
\item[(iv)] the flip move.
\end{itemize}
\label{thm1}

\end{theorem}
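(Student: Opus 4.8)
The plan is to argue geometrically on the disk bounded by the unknot and to induct on the bridge number $n$ of the plat, using a secondary complexity (the total number of crossings, or equivalently the number of saddle tangencies in a foliation of the disk) as a tie-breaker. The reformulation I want is the following: Birman's classical theorem produces a sequence of plats relating $\mathbf{K}$ to $\mathbf{U}$ that is allowed to \emph{stabilize}, and hence to increase bridge index; the content of Theorem~\ref{thm1} is that every such stabilization can be traded for a flip move, a pocket move, or a braid isotopy, so that the simplification can be carried out monotonically. For the base case $n=1$, a $1$-bridge presentation of the unknot is a plat on two strands, which after braid isotopy is a power of the single generator and is reduced by braid isotopy and destabilization down to $\mathbf{U}$.

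For the inductive step I would put the spanning disk $D$ of $\mathbf{K}$ into a normal form adapted to the plat. The $2n$-plat structure furnishes a height function for which $K$ has exactly $n$ maxima and $n$ minima, and I would foliate $D$ by its intersections with the level sets (equivalently, with the pages of the book decomposition associated to the plat axis), isotoping $D$ so as to remove circle leaves and center-type singularities until the foliation carries only elliptic vertices and hyperbolic saddles. Since $D$ is a disk, an Euler-characteristic count on the resulting singular graph forces the existence of a \emph{simple} region -- a low-valence vertex or an innermost tile -- which corresponds to a localized, removable feature of the plat: a cap (or cup) joined to the rest of the diagram by an essentially trivial arc.

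The crux, and the step I expect to be the main obstacle, is to convert each such simple region into an honest destabilization \emph{without} ever raising the bridge index. Here the two new moves do the work: I would use braid isotopies to slide the crossings beneath a cap out of the way, the flip move to reorient that cap (and its companion cup) so that the trivial arc becomes visible as a removable loop, and the pocket move to absorb the remaining trivial subarc so that a single destabilization deletes the corresponding maximum--minimum pair and strictly lowers $n$. Establishing this requires a finite case analysis over the possible local pictures of the foliation (the finitely many tile types and the admissible low-valence vertices), checking in each case that the reconfiguration is realized by flips, pockets and braid isotopies alone -- all of which preserve bridge index -- and that only the terminal move is a destabilization. Verifying that these cases are exhaustive, and that no configuration forces a genuine stabilization, is the heart of the argument.

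Finally, once the bridge number has been strictly reduced the inductive hypothesis finishes the simplification down to $\mathbf{U}$, and since every move in the chain is non index increasing the entire sequence is monotone. For the unlink generalisation I would run the identical argument with $D$ replaced by $n$ disjoint spanning disks, applying the foliation analysis to an innermost disk first and tracking the components through the flip and pocket moves; the only additional bookkeeping is ensuring that each destabilization respects the partition of the strands into components.
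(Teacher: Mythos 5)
Your overall strategy---foliate the spanning disc by the plat height function, run an Euler-characteristic count to locate a removable local configuration, and use flips and pockets to turn it into a bridge-index-preserving simplification---is the same as the paper's. But there are two genuine gaps. First, your normalization step is circular: you propose to isotope $D$ ``so as to remove circle leaves and center-type singularities until the foliation carries only elliptic vertices and hyperbolic saddles,'' but in the height-function foliation of a plat the center-type singularities \emph{are} the interior extrema (the tiles $T_{(0,0,1)}$ of \S\ref{tilingandfoliation}), and they cannot in general be cancelled by an isotopy of $D$ fixing the plat: the knot threads through the pockets they bound (this is exactly what happens for the Goeritz unknot). Removing them \emph{is} the content of the theorem, and it is what the pocket and flip moves are for. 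The paper only surgers off innermost circle leaves bounding empty discs; every remaining circle leaf carries real information and is counted by the complexity function $c(\mathbf{D_i})=(|T_{(4,4,0)}|,|T_{(0,0,1)}|)$. Relatedly, your proposed tie-breaker (crossing number) is not known to decrease under a flip move---the flip inserts a long positive or negative band of generators---whereas the paper's second coordinate $|T_{(0,0,1)}|$ provably drops, using the combinatorial identity $|T_{(0,0,1)}|=|T_{(2,2,1)}|+|T_{(0,0,3)}|$ of Lemma~\ref{bla}, which is itself derived from the Euler-characteristic gluing formula of Lemma~\ref{surfaceec} together with the observation that $|T_{(4,4,0)}|=n-1$.

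Second, you explicitly defer ``the heart of the argument'': verifying that some removable configuration always exists and that each one is realized by flips, pockets and braid isotopies alone. That deferral is where all the work lives. The paper fills it with Lemmas~\ref{lemma5} and~\ref{lemma6}: the dual graph of the tiling is a tree (because every leaf of the foliation separates the disc), and on that tree one isolates three precise removability conditions---a min tile below a down saddle with nothing nested inside its circle, the dual max-tile condition, and a $T_{(4,4,0)}$ tile flanked along the knot by a min and a max bridge tile---and shows by a finiteness/innermost argument on the tree that one of these must occur whenever any singularity of type $T_{(0,0,1)}$ or $T_{(4,4,0)}$ survives. Without an argument of this kind your case analysis has no guarantee of termination, and the claim that ``no configuration forces a genuine stabilization'' remains an assertion rather than a proof. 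Your induction on $n$ with the destabilization as the terminal move of each round is a reasonable reorganization of the paper's dictionary-order descent (the paper clears all interior extrema at fixed bridge index before destabilizing), but it only becomes a proof once the existence lemma is supplied.
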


Our methods for establishing this result are geometric and can readily be generalized to give us a similar statement for the unlink. See Corollary \ref{case_unlink} below:

\begin{corollary} 

\label{case_unlink}

Let $\mbf{L}$ be any {2n-plat} presentation of the unlink of $k$ components.  Let $\mathbf{U_k}$ be the standard $k$-bridge, zero-crossing diagram of the unlink. Then, there exists a finite sequence of plat representatives of the unlink:
$$ \mathbf{L} = \mathbf{L_{0}} \lra \mathbf{L_{1}} \lra \mathbf{L_{2}} \lra ... \lra \mathbf{L_{m}} = \mathbf{U_k} $$
such that $\mathbf{L_{i+1}}$ is obtained from $\mathbf{L_{i}}$ via the moves mentioned in Theorem \ref{thm1}.

\end{corollary}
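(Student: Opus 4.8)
The plan is to derive the corollary from Theorem~\ref{thm1} by induction on the number of components $k$, exploiting the defining feature of the unlink: its components bound pairwise disjoint embedded disks, and are therefore split. The base case $k=1$ is precisely Theorem~\ref{thm1}. Before starting the induction I would record the key structural observation that none of the four moves --- braid isotopy, destabilization, the pocket move, and the flip move --- uses that the underlying link is a knot: each is defined for an arbitrary $2n$-plat, takes plats to plats, preserves the link type, and does not increase the bridge index. Hence the entire toolkit of Theorem~\ref{thm1} is available for plats of links verbatim.

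For the inductive step, assume the statement for the $(k-1)$-component unlink and let $\mathbf{L}$ be a $2n$-plat of the $k$-component unlink with $k \ge 2$. Since $\mathbf{L}$ is split, there is an embedded $2$-sphere $\Sigma$ separating one component $C$ (an unknot) from the remaining components, which together form a $(k-1)$-component unlink $\mathbf{L}'$. The first substantive task is to use braid isotopy (and, if needed, double coset together with pocket and flip moves) to reposition $\mathbf{L}$ so that this splitting is \emph{visible at the level of the bridge sphere}: concretely, I would isotope $\Sigma$ until it meets the bridge sphere in a single circle, so that $C$ is carried by a consecutive block of strands forming a genuine sub-plat inside a ball $B$, with $\mathbf{L}'$ carried by the complementary strands and split from $C$ by $\partial B$.

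Once the splitting is realised in plat-compatible position, I would apply the Theorem~\ref{thm1} simplification to the sub-plat carrying $C$, treating the strands of $\mathbf{L}'$ as spectators. Because $C$ lies in the ball $B$ disjoint from $\mathbf{L}'$, every move used to simplify $C$ can be performed inside $B$ without disturbing $\mathbf{L}'$, and by Theorem~\ref{thm1} this reduces $C$ to a single trivial bridge while never increasing bridge index. After deleting this isolated trivial bridge, what remains is a $2(n-1)$-plat presenting the $(k-1)$-component unlink $\mathbf{L}'$, to which the inductive hypothesis applies. Concatenating the two sequences of moves carries $\mathbf{L}$ to the standard $k$-bridge diagram $\mathbf{U}_k$ through non index increasing moves, as required.

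The main obstacle is the second step: arranging the splitting sphere $\Sigma$ to intersect the bridge sphere in a single circle so that $C$ becomes an honest sub-plat, using only the allowed moves and without raising the bridge index. This requires an innermost-disk, cut-and-paste analysis of the circles in which $\Sigma$ meets the bridge sphere, together with a check that eliminating the inessential circles can be realised by pocket and flip moves (precisely the role those moves are expected to play in the proof of Theorem~\ref{thm1}). The one subtlety to verify carefully is that performing these reductions in the presence of the spectator strands of $\mathbf{L}'$ never forces a local bridge-index increase --- that is, that the monotonicity established for the knot case survives the bookkeeping of the extra components. Granting this, the induction closes and the corollary follows.
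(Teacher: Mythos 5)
Your proposal takes a genuinely different route from the paper, and it contains a gap at exactly the point you flag as the ``main obstacle.'' The paper does not induct on the number of components and never introduces a splitting sphere. Instead it uses the defining data of the unlink directly: $\mathbf{L}$ bounds $k$ pairwise \emph{disjoint} spanning discs, each of which carries the singular foliation and tiling of \S\ref{tilingandfoliation}, so the singularity-removal algorithm from the proof of Theorem~\ref{thm1} can simply be run on each disc in turn --- disjointness of the discs is the only extra input needed. Once no disc carries a singularity of type $T_{(4,4,0)}$, $T_{(2,2,1)}$ or $T_{(0,0,1)}$, the remaining crossings are removed by braid isotopies and double coset moves. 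At no stage does the paper need the splitting to be ``visible at the level of the bridge sphere.''

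The gap in your argument is the step where you isotope the splitting sphere $\Sigma$ to meet the bridge sphere in a single circle, so that the component $C$ becomes an honest sub-plat on a consecutive block of strands. This is not a routine innermost-disc argument: it is the plat analogue of the Birman--Menasco result (cited in the introduction as \cite{bm_IV}) that a closed braid representing a split link can be made \emph{obviously} split using only index-preserving moves, and that statement is a theorem requiring the full foliation machinery, not a preprocessing step one can grant. In particular, your claim that eliminating the inessential circles of $\Sigma \cap (\text{bridge sphere})$ can be realised by pocket and flip moves without raising the bridge index is precisely the kind of assertion the complexity argument of Lemmas~\ref{lemma5} and~\ref{lemma6} exists to justify, and you would have to redo that analysis for the sphere $\Sigma$ rather than for a spanning disc. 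The paper's choice to foliate the spanning discs themselves (which are disjoint by hypothesis, so nothing needs to be ``made'' split) is what lets it bypass this difficulty entirely; if you want to keep your inductive structure, you should either prove the visible-splitting lemma or replace $\Sigma$ by the disjoint spanning discs, at which point your induction collapses into the paper's one-disc-at-a-time argument.
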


This is a striking result because there exist examples of 2-bridge unknots (presented in \ref{goeritz}), which can not be simplified to the trivial unknot without first increasing the number of crossings. Further, we can construct examples of 2-bridge unknots which need {\em arbitrarily} many extra crossings before they can be simplified to the trivial diagram. But the flip move enables us to do the simplification process without adding any extra crossings.

%This paper gives a recipe for {\em monotonic} simplification of plat presentations of the unlink, with respect to bridge index. We introduce two new plat isotopies (isotopies that take plats to plats) in ${\mbb{R}}^3$ which enable this simplification. We also aim to contextualise our results in an algebraic setting by translating the geometric relation between braids whose plat closure is the unlink, to an algebraic relation between these braids.

\subsection{Philosophical Motivation}

Between 1990 and 2006, Birman and Menasco wrote a sequence of papers \cite{bm_I, bm_II, bm_III, bm_IV, bm_V, bm_VI, bm_mtwsi} with the running major title ``Stabilization in the braid groups’’, that investigated the question of what stabilization accomplishes in Markov's theorem. One key result coming out of that sequence of papers was the discovery of a new isotopy—the {\em exchange move}---which also preserves braid index and writhe, but could jump between conjugacy classes. Thus, in \cite{bm_V}, it was established that using just a sequence of braid isotopies, exchange moves and destabilizations one could go from any closed braid representative of the unlink to the representative of minimal index—the trivial closed braid. In \cite{bm_IV} it was shown that, using a sequence of braid isotopies and exchange moves, one could take a closed braid representing a split or composite link to one where it was ``obviously’’ split or composite, respectively.

In the setting of plat presentations of links, we now ask the same question: {\em what does stabilization accomplish?} This note is the first of a planned sequence of papers modeled on the Birman---Menasco sequence addressing this question. We introduce two new isotopy moves that preserve the bridge index: the {\em pocket move} (see \S \ref{dcms}) which rightly should be thought of as a geometrization of a double coset move (see \S \ref{dcms}); and the {\em flip move} (see \S \ref{flipmove}) which the reader might think of as being analogous to an exchange move of the closed braid setting.

\begin{figure}[ht!]
%\centering
\labellist
\small\hair 2pt
\pinlabel {$n$-bridges} at 208 460
\pinlabel {$n$-bridges} at 205 22
\pinlabel {$\textbf{B}$} at 204 235
\pinlabel {$2n$-strands} at 450 298
\endlabellist
\centering
\includegraphics[width=9cm, height=7cm]{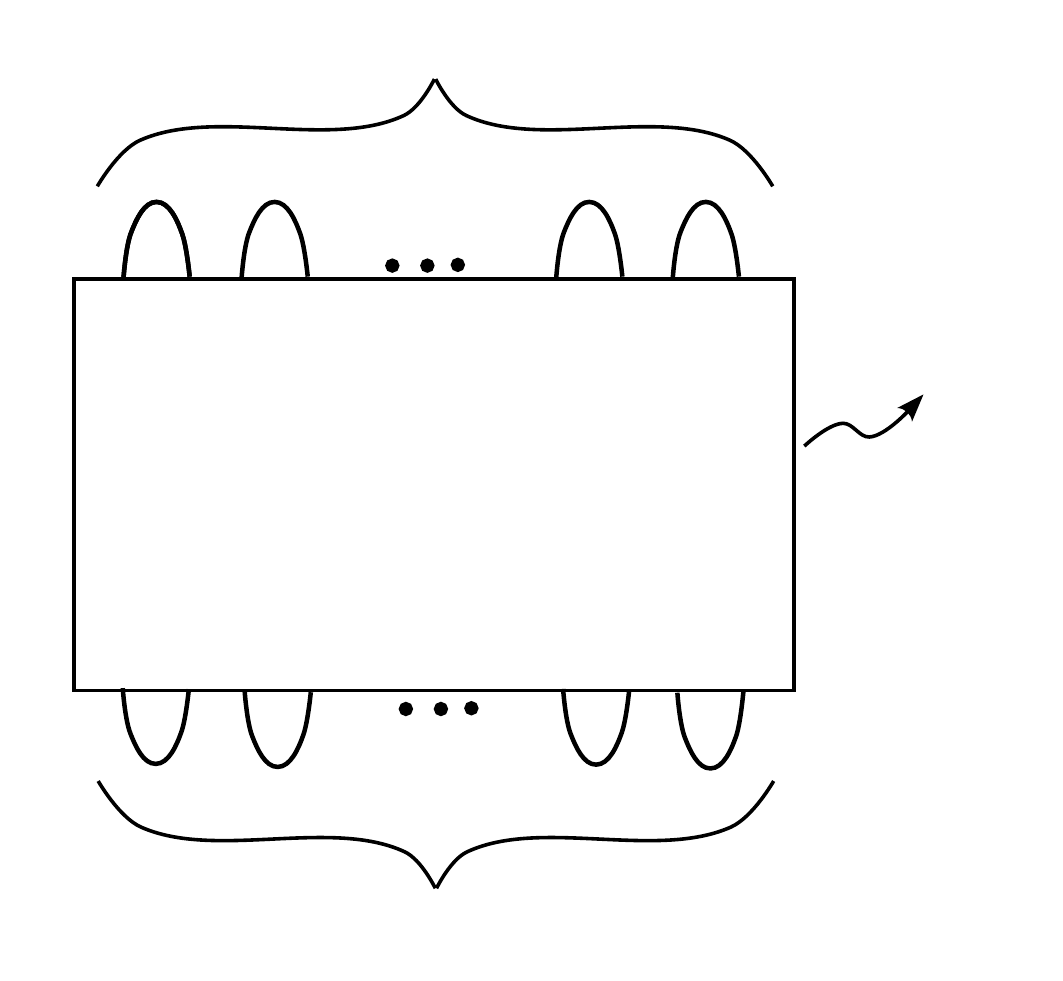}
\caption{A generic 2n-plat}
\label{plat}
\end{figure}

%To illustrate this theorem we offer the reader the following examples:

\subsection{Idea of proof}

Our strategy for establishing Theorem \ref{thm1} is inspired by the singular foliation technology coming from the Birman---Menasco papers. There they considered an appropriate essential surface in the complement of the closed braid and studied the singular foliation induced by the braid fibration.

In our setting we start with a pair $\mbf{(D,K)}$-spanning disc and plat presentations of the unknot. We then consider the singular foliation induced on $D$ coming from the plat height function. The  singular foliation technology for proving Theorem \ref{thm1} will have the following salient components: 

(i) A system of level curves and arcs corresponding to the regular values of the height function (See \S \ref{level curves}) \\
(ii) Generic singular leaves corresponding to the critical values of the height function. The foliated neighborhood of each singular leaf will form a {\em singular tile}.  These tiles will cover the spanning disc. (See \S \ref{tilingandfoliation}) \\
(iii) The covering of the spanning disc by singular tiles will yield an Euler characteristic equation (See Lemma \ref{surfaceec}), this equation is used to obtain a combinatorial relation between different types of tiles. (See Lemma \ref{lemma6}\\
(iv) The tiling of the spanning disc will be ``dual'' to a finite trivalent graph that allows us to define a complexity measure on the tiling. (See \S \ref{graphcomplexity}) \\
(v) The local geometric realisation of certain tiling patterns will imply the plat admits a pocket or flip move. Applying a pocket or flip move will strictly reduce the complexity measure. 

In \S \ref{tilingandfoliation}, we describe the `tiles' associated with the foliation, each tile type will contain a single singularity. 

This gives an obvious complexity measure on the pair $\mbf{(D,K)}$, counting the number of singularities of certain types. This is rigorously defined in \S \ref{graphcomplexity}. 

In \S \ref{tilingandfoliation}, we explain how the ``configuration" in the singular foliation corresponds to the plat moves and results in \S \ref{mainresult} detail how plat moves remove these singularities, thus reducing the complexity measure on the spanning disc of the unknot.

%Finally, in \S \ref{futurework}, we talk about future work and problems.

\subsection{Outline of the paper} 

In \S \ref{prelims}, we introduce the basic terminology and set up the required machinery. \S \ref{prelims} begins with a description of the system of level curves in \S \ref{level curves}. This is followed by an explanation of moves on plats in \S \ref{platmoves}. Then, we have a detailed description of the different types of singularities that occur on the spanning disc (with respect to the plat height function), and how that set up is used to foliate the disc and construct a tiling of it, in \S \ref{tilingandfoliation}. \S \ref{prelims} concludes with an introduction to the graph corresponding to a disc foliation and how that is used to define a complexity function on the disc, in \S \ref{graphcomplexity}. 

Moving on to the next section, \S \ref{mainresult} consists of four lemmas leading up to the proof of Theorem \ref{thm1}, followed by the proof of Corollary \ref{case_unlink}, which extends Theorem \ref{thm1} to the unlink. Lemma \ref{surfaceec} gives the relation between the Euler characteristics of a surface $S$ and subsurfaces $S_i$'s such that the $S_i$'s are glued together to give $S$. We use this formula to obtain a combinatorial relation between different types of tiles in Lemma \ref{bla}. Lemma \ref{bla} is crucial in helping us understand the constraints on how different singularities on the spanning disc can grow with respect to each other. It also helps us identify some key characteristics of the graph associated with the foliation of the spanning disc. This leads us to Lemma \ref{lemma5} which tells us how to start eliminating singularities from the disc, using the language of graph theory. In particular, in Lemma \ref{lemma5}, we give conditions that a vertex in the graph needs to satisfy so that the singularity it represents can be removed using one of the moves in Theorem \ref{thm1}, giving us a plat with a strictly smaller value of the complexity function. In Lemma \ref{lemma6}, we prove that a vertex satisfying conditions of Lemma \ref{lemma5} always exists, if there are any singularities of a certain type on the disc. 

The rest of \S \ref{mainresult} is dedicated to proving the statement of Theorem \ref{thm1} and extending it to the unlink. 

In \S \ref{S^3}, we explore the algebraic situation in $S^3$, and prove that in $S^3$, there is only double coset class corresponding to the unknot! 

Finally, we talk about the Goeritz unknot and two different methods of untangling it: with and without the flip move. The reader can skip this section if they like, as it has no bearing on the rest of the paper.

\section*{Acknowledgements}

We thank William Menasco for introducing the author to this problem and his constant guidance throughout this research project. We thank Joan Birman for taking a very active interest in this research, proof-reading this manuscript and providing important insights to the author, and Dan Margalit for proof-reading this manuscript and offering helpful suggestions. 
We would also like to thank Ethan Dlugie for pointing us to Kunio Murasugi and Bohdan I. Kurpita's book \cite{murasugi_kurpita}, and Seth Hovland for a very careful reading of this manuscript and his comments on how to improve it. In chapter 4 of their book, Murasugi and Kurpita deal with a special class of braids and the algebraic properties of these braids. It turns out that these braids are the same as our `flip move' (described in \S \ref{flipmove}). We independently discovered the `flip' move, but interestingly, were motivated by the same phenomenon as Murasugi and Kurpita.

\section{\centering Preliminaries}

\label{prelims}

Beginning with plat presentations for non-oriented links, we start with a brief description of what they are. Consider the classical braid group on $2n$ strands, $\mathcal{B}_{2n}$. Then, for an element of $\mathcal{B}_{2n}$ i.e. a braid $B$ on $2n$ strands, we cap-off the $i$ and $i+1$ 
strands, for odd $i$ between $1$ and $2n-1$, at the top (bottom) of the braid with $n$ northern (southern) circle hemispheres, refer Figure \ref{plat}. Call this the plat defined by $B$.  Then $n$ is the {\em bridge index} of the presentation. We thus have a height function on the presentation that has $n$ maxima (minima) points all at the same level 
above (below) the braid. (We will employ this height function extensively in the arguments of this note). One can jump/isotope from one plat presentation to another through braid isotopies, which again are Reidemeister type-II \& -III moves placed in this setting. Braid isotopies do not change the bridge index. An isotopy by a stabilization (destabilization) adds (removes) two strands in the braid along with a maxima and minima, which is the Reidemeister type-I move placed in the plat setting. These isotopies are shown in Figures \ref{braidisotopy_one}, \ref{braidisotopy_two} and \ref{stab}.

Birman showed in \cite{birman_1976} that any link type can be represented as a plat. It turns out that this representation is highly non unique and every link type has {\em infinitely many} distinct plat presentations. By ``distinct plat presentation'', we mean that the braids corresponding to these plat presentations correspond to different elements in the braid group. In the same paper, Birman proved the following result, which tells us how any two braids, whose plats represent the same knot type, are algebraically related to one another:

\begin{namedthm*}{Birman's Result}

Let $\mbf{L_i}$, i =1, 2, be tame knots. Choose elements $\phi_i \in \mathcal{B}_{2n_i}$ in such a way that the plat defined by $\phi_i$ corresponds to the same knot type as $\mbf{L_i}$. Then, $\mbf{L_1} \approx \mbf{L_2} $ if and only if there exists an integer $t \geq$ max$(n_1, n_2)$ such that, for each $n \geq t$, the elements 
$$\phi_{i}^{'} = \phi_i \sigma_{2n_i} \sigma_{2n_i +2} ... \sigma_{2n-2} \in \mathcal{B}_{2n}, i = 1, 2$$
are in the same double coset of $\mathcal{B}_{2n}$ modulo the subgroup $\mathcal{K}_{2n}$.

\end{namedthm*}

Birman's Result can be restated as follows: 

\begin{namedthm*}{\textbf{Stable equivalence of plat presentations}}

Let $K$ be a $2n$-plat presentation of $\mbf{K}$, and $K'$ be a $2n^{\prime}$-plat presentation of $\mbf{K}$. Then, there is a finite sequence of plat presentations of $\mbf{K}$: 
$$ K = K_{0} \longrightarrow K_{1} \longrightarrow K_{2} \longrightarrow ... \longrightarrow K_{m} = K' $$
such that $K_{i+1}$ is obtained from $K_{i}$ via the following moves:
\begin{itemize}
\item[(i)] braid isotopies;
\item[(ii)] double coset moves;
\item[(iii)] stabilization or destabilization.
\end{itemize}
\end{namedthm*}

Let $\mbf{K}$ denote the isotopy class of the unknot and $\mbf{D}$ denote the disc bounded by $\mbf{K}$ in $\mbb{R}^3$. Let $\mbf{K_i}$ be a plat presentation of the unknot, from the sequence in Theorem \ref{thm1}, and let $D_i$ be a geometric realisation of $\mbf{D}$ corresponding to $\mbf{K_i}$. We assign a height function $h$ to $D_i$ such that $h$ is a Morse function, regarded as the projection function from $D_i$ onto $\mbb{R}$ (think of this as the natural height function $h: \mbb{R}^3 \lra \mbb{R}$. We position $D_i$ in $\mbb{R}^3$ so that $h$ takes values on $D_i$ varying between $-0.25$ and $1.25$ such that, the points of maxima of the top bridges are at height $h = 1$ and the points of minima of the bottom bridges are at height $h=0$. With this setup, thinking of $\mbb{R}^3$ as $\mbb{R}^2 \times \mbb{R}$, then $\mbb{R}^2 \times \{*\}$ are the level planes for $h$, say $P_t = h^{-1}(t) =  \mbb{R}^2 \times \{t\}$, for $t \in [-0.25, 1.25]$.

\subsection{System of level curves}   \label{level curves}

Define $C_t^i := D_i \cap P_t$. Then, $C_t^i$ is the set of level curves of $D_i$ at level $t$, which is basically a cross section of the disc at height $t$. Note that, if $\mbf{K}_i$ is a plat of index $n$, then, for $ t \in (0,1)$, $C_t^i$ consists of exactly $n$ arcs and $m_t^i$ closed curves, say. Call these arcs $a_{i,t}^j$, for $j \in \{1,2,..., n \}$ and the closed curves $s_{i,t}^j$, for $j \in \{1,2, ..., m_t^i\}$. We position $D_i$ so that, for $t$ values outside the interval $(0,1)$, $C_t^i$ only consists of simple closed curves, say $m_t^i$ many of them (same notation as for $t \in (0,1)$), and a discrete set of points, corresponding to local maxima and minima on $D_i$. If, for any $i,j,t$, $s_{i,t}^j$ does not contain any arc or curve inside of it, we surger it off. This means that, for any $i$ and any $t \in [-0.25, 1.25]$, the only closed curves that remain on $D_i$ are the ones that contain an arc or curve. Further, notice that, for all but finitely many values of $t$, the arcs $a_{i,t}^j$ are all simple arcs and the closed curves $s_{i,t}^j$ are all simple closed curves. The values of $t$ for which we get intersection (which includes self-intersection) between arcs or closed curves are the values where the disc $D_i$ has a saddle. We position the disc so that no value of $t$ has more than one saddle.

\subsection{Moves on plats} \label{platmoves}

\subsubsection{Braid isotopies} \label{braid_isotopies}

Braid isotopies are moves on plats where the top and bottom bridges (i.e. the local max's and min's) are fixed. Algebraically, this is equivalent to changing the braid word corresponding to the plat using the relators in the braid group. So the end points of the strings are all fixed and with that constraint, any manipulation of strings is allowed as long as the resulting object is still a plat. This is shown in Figure~\ref{braidisotopy_one} and Figure~\ref{braidisotopy_two}.

\begin{figure}[ht!]
\centering
\labellist
\small\hair 2pt
\pinlabel {R II} at 59 108
\pinlabel {R II} at 307 108
\endlabellist
\centering
\includegraphics[width=10cm, height=2.5cm]{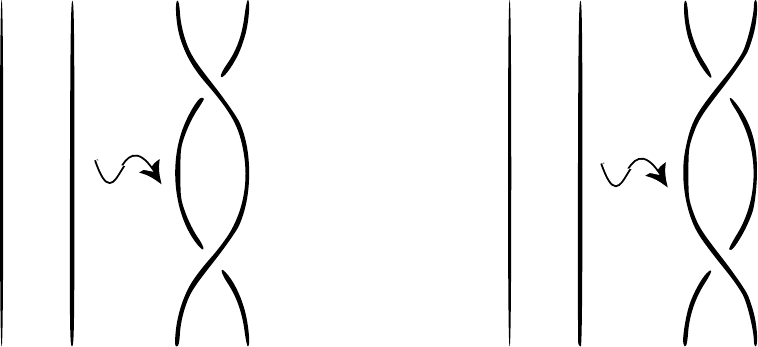}
\caption{Braid isotopy corresponding to Reidemeister II move}
\label{braidisotopy_one}
\end{figure}

\begin{figure}[ht!]
\centering
\labellist
\small\hair 2pt
\pinlabel {R III} at 125 110
\endlabellist
\centering
\includegraphics[width=6cm, height=2.5cm]{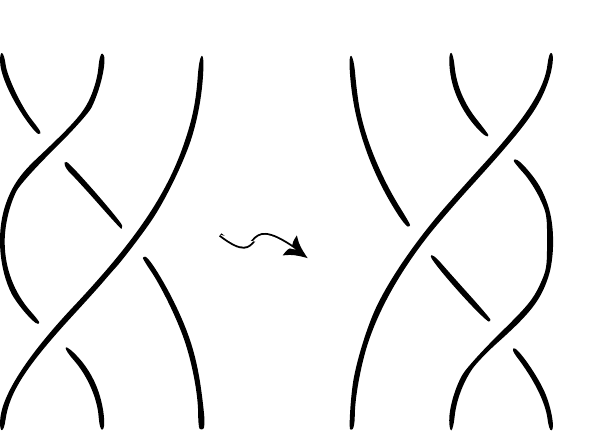}
\caption{Braid isotopy corresponding to Reidemeister III move}
\label{braidisotopy_two}
\end{figure}

\subsubsection{Stabilization and Debstabilization} \label{stab_destab}

%%%%%%Cite something here for the standard defn.
%{\color{red} \bf I would use something other than $\sigma$ for an entire braid...say $B$ that is a word in terms or $\sigma_i {\rm 's}$.  Then a stabilzation is just $B \sigma_{2n}$.

\textbf{Stabilization}, shown in Figure~\ref{stab}, takes the plat defined by a $2n$-braid $\mbf{B}$ to the plat defined by the braid $\mbf{B} \sigma_{2n}$. Therefore, stabilization is equivalent to adding a generator and \textbf{destabilization} is the reverse process of that. Note that stabilisation increases the bridge index by one, while destabilisation decreases the bridge index by one.

\begin{figure}[ht!]
\centering
\labellist
\small\hair 2pt
\pinlabel {$\mbf{B}$} at 120 127
\pinlabel {$\mbf{B}$} at 428 127 
\pinlabel {$\sigma_{2n}$} at 529 26
\pinlabel {STABILIZATION} at 282 146
\pinlabel {DESTABILIZATION} at 282 100
\endlabellist
\centering
\includegraphics[width=18cm, height=7cm]{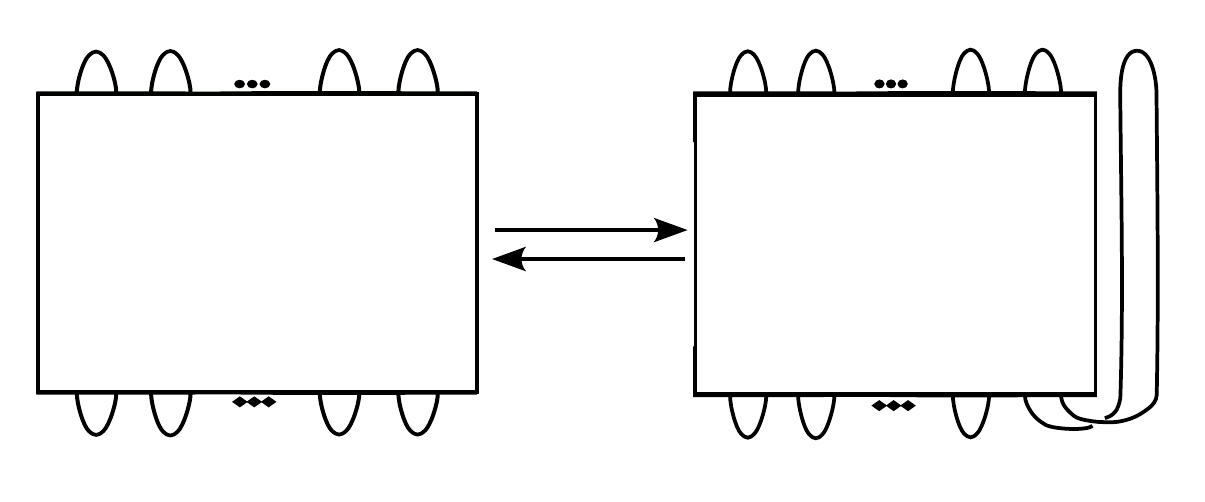}
\caption{Stabilizing and destabilizing a plat}
\label{stab}
\end{figure}

%Lemma 9 in \cite{birman_1976} walks us through how stabilising at the standard spot, as in Fig ~\ref{stab} is equivalent to stabilising at any other spot, via a sequence of braid isotopies and double coset moves. Thus, we can replace the standard stabilization (destabilization) with this generalised stabilization (and destabilization), as explained in the previous paragraph. 

\subsubsection{Double coset moves and the Pocket move}

\label{dcms}

Double coset moves on plats permute the bridges in different ways, thus giving freedom of movement to the bridges. For the braid group $\mathcal{B}_{2n}$, a double coset move on the plat defined by $\sigma \in \mathcal{B}_{2n}$ is one which takes this plat to the plat defined by $\alpha \sigma \beta$  where $\alpha$ and $\beta$ are elements in $\mathcal{K}_{2n}$, the Hilden subgroup (\cite{hilden_sbgp}) of the braid group, which has the following generating set: 
$\{ \sigma_{1}, \sigma_{2} {\sigma_{1}}^{2} \sigma_{2}, \sigma_{2i} \sigma_{2i-1} \sigma_{2i+1} \sigma_{2i}, 1 \leq i \leq n-1 \}$. The relators for $\mathcal{K}_{2n}$ were computed by Stephen Tawn, in \cite{tawn2007presentation}. 

For $n=2$, the generators of the Hilden subgroup are: $\{ \sigma_{1}, \sigma_{2} {\sigma_{1}}^{2} \sigma_{2}, \sigma_{2} \sigma_{1} \sigma_{3} \sigma_{2} \}$. The plat moves on the bottom bridges coming from these generators are shown in Figure~\ref{hildengens}. 

Note that the double coset moves essentially try to capture all those moves, not covered by the scope of braid isotopies and stabilization/destabilization, which take plats to plats while preserving the link type. Figure~\ref{hildengens} is a good motivating example to observe this phenomenon. Notice that all the moves in Figure~\ref{hildengens}, change the braid word but not the knot type.

Birman, in \cite{birman_1976}, does not use the nomenclature, ``double coset moves". We are using it here, because, without this class of moves, given two plat presentations $K_1$ and $K_2$ of a knot, using \textbf{only} braid isotopies and stabilization/destabilization, the best we can do is get $K_1$ and $K_2$ to be in the same double coset of $\mathcal{B}_{2n}$ modulo $\mathcal{K}_{2n}$. 

\begin{figure}[ht!]
\centering
\labellist
\small\hair 2pt
%\pinlabel $\sigma_1$ at 243 286 
%\pinlabel {$\sigma_{2} {\sigma_{1}}^{2} \sigma_{2}$} at 76 38
%\pinlabel {$\sigma_{2} \sigma_{1} \sigma_{3} \sigma_{2}$} at 393 33
\endlabellist
\centering
\includegraphics[width=12.5cm, height=3.5cm]{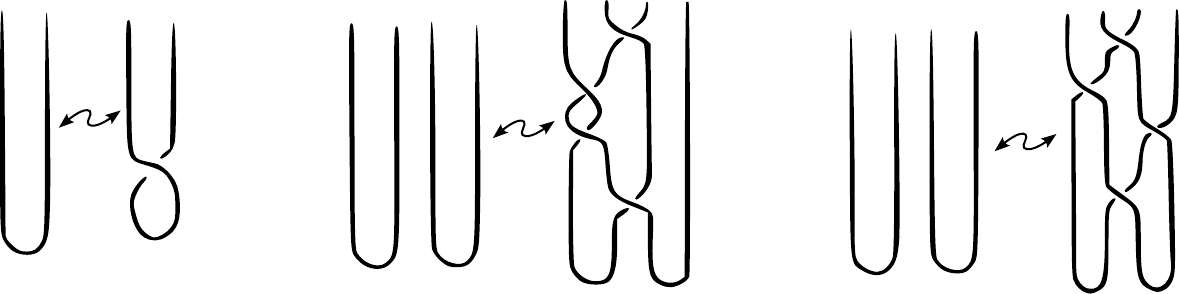}
\caption{Generators of $\mathcal{K}_4$}
\label{hildengens}
\end{figure}

%We now subdivide the double coset moves into two categories: \\

%a) Saddle creating double coset moves: This is when a bridge is sent `through' another bridge, as shown in the diagram on the left in the second row of Figure \ref{hildengens}. This move creates a `pocket' in the disc bounded by the knot, and this pocket now contains the bridge that was sent through. This consequently changes the foliation of the disc, creating a saddle on the disc.

%b) Non-saddle creating double coset moves: Doing a half twist on a bridge, as shown in the first plat in Figure~\ref{hildengens}, does not create saddles in the foliation, but it does rotate the level arc corresponding to that bridge by 180 degrees. Similarly, moving bridges around completely over or under other bridges, as shown in the diagram on the right in the second row of Figure~\ref{hildengens}, does not create any singularities on the disc, and can only be detected using the system of level curves. 

%\subsubsection{The Pocket move}    \label{pocket_sec}

\begin{figure}[ht!]
\centering
\labellist
\small\hair 2pt
\pinlabel $A$ at 92 177 
\pinlabel $A$ at 153 182
\pinlabel $A$ at 248 181
\pinlabel $A$ at 323 182
\pinlabel $B$ at 92 7
\pinlabel $B$ at 155 11
\pinlabel $B$ at 248 11
\pinlabel $B$ at 325 11
\endlabellist
\centering
\includegraphics[width=10cm, height=4cm]{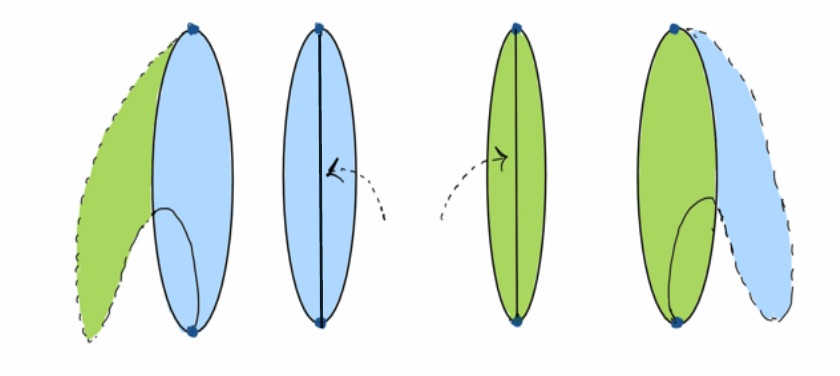}
\caption{The pocket move on the index one unknot}
\label{pocket}
\end{figure}

\begin{figure}[ht!]
\centering
\labellist
\small\hair 2pt
\pinlabel {A} at 224 402
\pinlabel {B} at 224 118
\pinlabel {A} at 585 402
\pinlabel {B} at 585 118
\endlabellist
\centering
\includegraphics[width=15cm, height=8.3cm]{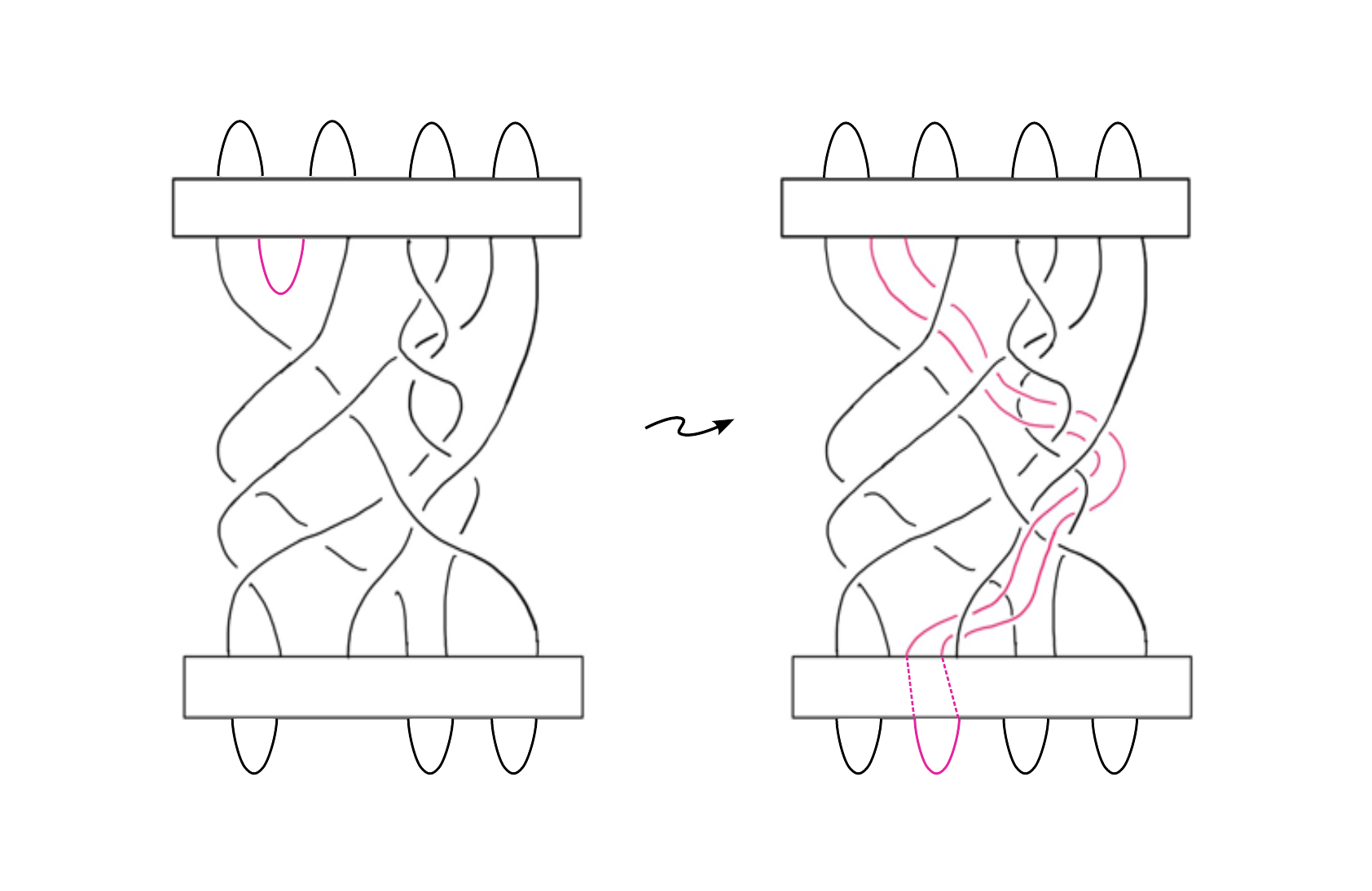}
\caption{pocket move on a higher index plat}
\label{pocket_general}
\end{figure}

\begin{figure}[ht!]
\centering
\labellist
\small\hair 2pt
\pinlabel {A} at 153 361
\pinlabel {B} at 143 85
\pinlabel {A} at 444 361
\pinlabel {B} at 458 85
\pinlabel {C} at 118 313
\pinlabel {C} at 423 39
\endlabellist
\centering
\includegraphics[width=15cm, height=8.5cm]{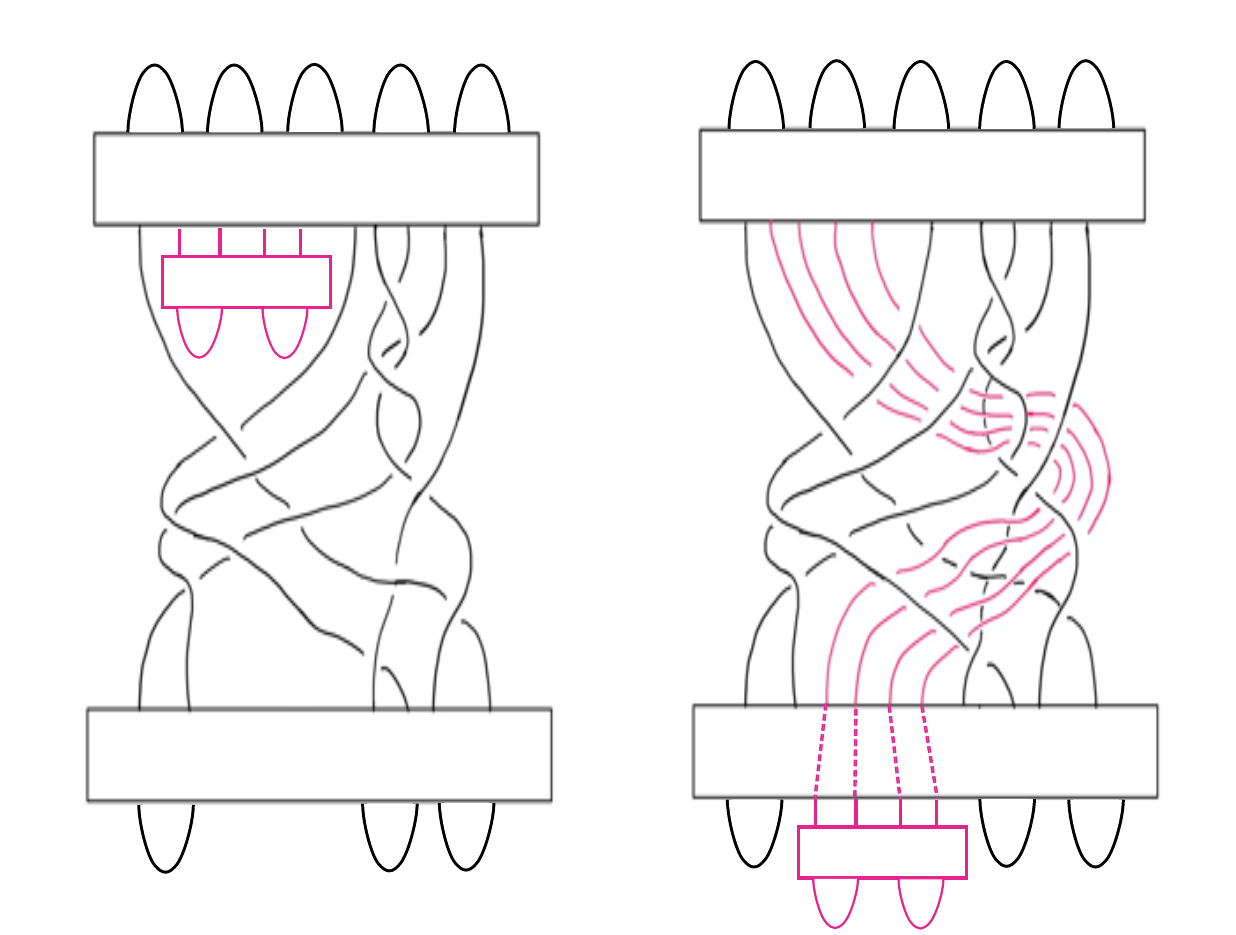}
\caption{Generalised pocket move on a higher index plat}
\label{block_pocket}
\end{figure}

%The \textbf{pocket move on the index one unknot diagram} leaves the unknot unchanged but has the effect of poking a finger into the disc as shown in Figure~\ref{pocket}. The final result looks something like a butterfly catching net. In the Figure, we follow the convention that the blue side is positive and green is negative. 

Figure \ref{pocket} shows part of the motivation behind the pocket move. The original idea was to capture moves which are a confluence of the following two phenomenon: \\
(i) plat moves which create saddles on the spanning disc of the unknot \\
(ii) plat moves that decode what the double cosets moves achieve geometrically

%For higher index plats, the pocket move is taking a bridge (either top or bottom) and sending it through a `hole' in the plane of projection such that the bridge goes over a strand under another, where both these strands come from the same bridge. Thus, it creates a pocket in the disc (discs) bounded by the knot (link), and this pocket now contains the bridge. This consequently changes the foliation of the disc, creating a saddle on the disc. An example of the pocket move is the second plat in Figure~\ref{hildengens}. Algebraically, we define the pocket move as follows: for the Hilden subgroup $\mathcal{K}_{2n}$, consider the generators $A = \sigma_{2} {\sigma_{1}}^{2} \sigma_{2}$, $B_i = \sigma_{2i} \sigma_{2i-1} \sigma_{2i+1} \sigma_{2i}$. Then the pocket move is taking a braid $B$ to $\sigma B$ or $B \sigma$, where $\sigma$ is a word in the generators $A$, $B_i$, with non zero exponent of $A$ in $\sigma$.

Figures \ref{pocket_general} and \ref{block_pocket} depict a generic pocket move. Consider a plat presentation where a top or a bottom bridge which is truncated at a level strictly between the top and the bottom levels of the plat (as shown in the Figures), say $t = t_0$, where $0 < t_0 < 1$. Now, define a smooth path $\gamma$, $\gamma: [0,1] \lra \mbb{R}^3$, such that the plat height function is monotonic on $\gamma$ and we have that $\gamma(0) = t_0$ and $\gamma(1) = 1$. Now, we take the local max or min corresponding to the bridge that we picked and drag it along the path $\gamma$. This results in the bridge weaving around the rest of the plat, as shown in Figure \ref{pocket_general}. Further, we can also twist the bridge as it follows the path $\gamma$, and we can do this process for any top or bottom bridge that can be truncated prematurely, with each bridge following a potentially different path.

\begin{lemma}
   
Pocket moves can be realised as a combination of braid isotopies and a sequence of double coset moves.

\label{pmisdcm}
\end{lemma}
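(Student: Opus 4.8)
The plan is to reduce the continuous dragging of a bridge along the path $\gamma$ to a finite concatenation of elementary moves, each of which is manifestly either a braid isotopy or a single generator of $\mathcal{K}_{2n}$ acting at the top (or bottom) of the plat, and then to read off the double coset move from the product of generators so obtained. First I would exploit the hypothesis that the plat height function is monotonic along $\gamma$: this lets me parametrise the dragging isotopy by height, so that as the truncated bridge is lifted from level $t_0$ to level $1$ its maximum sweeps monotonically upward while its two legs trace a horizontal motion relative to the braid $\mathbf{B}$. After a small perturbation putting the isotopy in general position, I would arrange that the interactions between the dragged bridge and the rest of the plat occur at finitely many distinct heights, and that at each such height exactly one elementary event takes place: either (i) the bridge undergoes a half-twist, (ii) the bridge passes a single strand of $\mathbf{B}$, or (iii) the bridge passes over or under an adjacent bridge. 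Between consecutive events the motion is a mere horizontal rearrangement fixing all caps, i.e. a braid isotopy.

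Next I would identify each elementary event with a generator of the Hilden subgroup, up to braid isotopy, using the geometric descriptions recorded in Figure~\ref{hildengens}. A half-twist of the bridge occupying strands $2j-1,2j$ becomes, after conjugating the bridge into first position by the swap generators $\sigma_{2i}\sigma_{2i-1}\sigma_{2i+1}\sigma_{2i}$, precisely the generator $\sigma_1$. An elementary passage of the bridge past one strand of $\mathbf{B}$ I would realise by first applying a braid isotopy that brings the relevant portion of that strand up so as to sit immediately below the bridge (this is legal, since braid isotopy fixes every cap and may freely rearrange the strands in between), then performing the passage as a local move supported in a neighbourhood of the top caps, which after the same conjugation is the generator $\sigma_2\sigma_1^2\sigma_2$, and finally undoing the preparatory braid isotopy. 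The passage over an adjacent bridge is the swap generator itself. Allowing the bridge to twist as it follows $\gamma$ folds into this scheme via repeated application of the $\sigma_1$ generator and is comparatively routine.

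Finally, concatenating these elementary moves in the order dictated by the heights at which they occur recovers the full pocket move, now written as an alternating product of braid isotopies and single Hilden generators. Collecting the generators acting on the top caps into a word $\alpha$ and those acting on the bottom caps into a word $\beta$, with $\alpha,\beta \in \mathcal{K}_{2n}$, exhibits the net algebraic effect as the double coset move $\sigma \mapsto \alpha\sigma\beta$, interleaved with the intervening braid isotopies; this is exactly the assertion of the lemma.

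I expect the main obstacle to be step (ii): verifying rigorously that, after sliding a strand of $\mathbf{B}$ up against the bridge, the passage is genuinely the local generator $\sigma_2\sigma_1^2\sigma_2$ and has not secretly altered the braid below. The delicate point is bookkeeping—one must confirm that the preparatory and restoring isotopies are \emph{braid} isotopies in the strict sense (no strand is ever pushed through a cap), and that the local passage is supported in a ball meeting the plat only in the standard bridge configuration, so that its contribution to the braid word is precisely the claimed generator and nothing more.
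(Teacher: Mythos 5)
Your overall strategy---discretise the drag along $\gamma$ into finitely many elementary events and identify each event with a Hilden generator---is a genuinely different organisation from the paper's proof. The paper instead considers the disc co-bounded by the before and after positions of the dragged bridge, pushes the single saddle of that disc down to a level lying below every crossing of the plat (which coils up the bridge), and only then reads off the double coset moves from the uncoiling at the bottom of the braid. Your event-by-event analysis is essentially the right way to decompose a motion of a cap \emph{once that motion takes place entirely at the level of the caps}, i.e. in a region above (or below) all of the braiding: there, a loop of one cap around vertical strands and other caps is visibly a word in the generators $\sigma_1$, $\sigma_2\sigma_1^2\sigma_2$, $\sigma_{2i}\sigma_{2i-1}\sigma_{2i+1}\sigma_{2i}$ of $\mathcal{K}_{2n}$, which is what your second paragraph describes.

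The gap is that your events do not occur at the level of the caps: they occur at the intermediate heights through which the dragged extremum passes, where the strands of $\mathbf{B}$ are still braided both above and below it. Two things go wrong there. First, the motion between consecutive events is the upward drag of the cap itself; a braid isotopy by definition fixes every cap, so these transitions are not braid isotopies and the intermediate configurations are not standard plats. Second, a double coset move multiplies the braid word only on the far left or far right, so a Hilden generator inserted in the middle of the word (which is what an event at an intermediate height produces) is not a double coset move. Your ``preparatory braid isotopy'' cannot repair this, because what must be transported to the boundary is not merely a portion of a strand of $\mathbf{B}$ but the dragged cap together with the event itself, and carrying that cap past the strands lying above it is precisely the nontrivial geometric content of the lemma, not something braid isotopy provides for free. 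This is exactly the step the paper supplies: pushing the saddle of the co-bounding disc below all crossings localises the entire discrepancy at the bottom of the plat, after which a generator-by-generator identification of the uncoiling with elements of $\mathcal{K}_{2n}$ is legitimate. With such a localisation step inserted before your event analysis, your argument goes through; without it, the identification of your events with double coset moves on the plat is not justified.
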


\begin{proof}

Without loss of generality, consider a pocket move where the first bottom bridge from the left follows a path $\alpha$ and to cause braiding with the rest of the plat. The logic is similar for any other top or bottom bridge. If there are multiple bridges involved in the pocket move, an induction argument will suffice. \\
Then, we consider the position of this bridge before and after doing the pocket move, as the example in Figure \ref{pmdcm} illustrates. There we are doing the pocket move using the first bridge from the left, considered at the top. The position of the bridge after the pocket move is shown in pink, and before doing the pocket move is in black. Notice that the pink arc and the black arc co-bound a disc. We consider the level curves of this disc. For this disc, at each level other than the top and bottom level, we will see two pink dots and two black dots, and two arcs joining these four dots. Notice that there will be a singularity at some level where these two arcs intersect. Before this level, we have two arcs, each joining a pink dot to a black dot, after the level of singularity, we have two arcs, one joining the two black dots to each other and another one joining the pink dots. We push this singularity down to a level $t_{\alpha}$ such that all the crossings in the plat are at a level strictly above $t_{\alpha}$. Below level $t_{\alpha}$, call the arc joining the two black dots, the ``black arc''. This process of pushing down the singularity causes the black arc to get coiled up, in order to avoid the spanning disc of the unknot at any level. Uncoiling this black arc below level $t_{\alpha}$ can be achieved through doing double coset moves, and this process will isotope the black bottom bridge to its intended configuration of the pink bridge. This can be observed in Figure \ref{pmdcm}, where we illustrate how the pocket move done anywhere can be moved to a level below any other crossings in the plat, using braid isotopies, as shown in picture IV. Once we reach the configuration in picture IV, it is clear how we can go from the black bridge to the pink one using just the double coset moves. \\

%The reader can convince themselves of the veracity of this argument by observing that the double coset moves are a method of permuting bridges in different ways. Looking at Figure \ref{pmdcm}, notice that going from the first picture to the second, we permute the first and the second bridge by moving the first bridge in front of the second, followed by braid isotopies. Then going from the second to the third picture, we permute the first two bridges again, this time moving the second in front of the first, followed by braid isotopies. Finally, we get the fourth picture from the third by permuting the third and the fourth bridges and doing braid isotopies. Now, in the fourth picture, we again need to do moves which are a combination of double coset moves (permuting the pink bridge with two blue bridges one after the other, with the pink bridge always staying behind the blue bridges) and braid isotopies, to move the bridge from the pink configuration to the blue. This process can be replicated in general. This proves the lemma.

\begin{figure}[ht!]
\centering
\labellist
\pinlabel $I$ at 103 293 
\pinlabel $II$ at 285 293
\pinlabel $III$ at 90 10
\pinlabel $IV$ at 302 10
\endlabellist
\centering
\includegraphics[width=12cm, height=11cm]{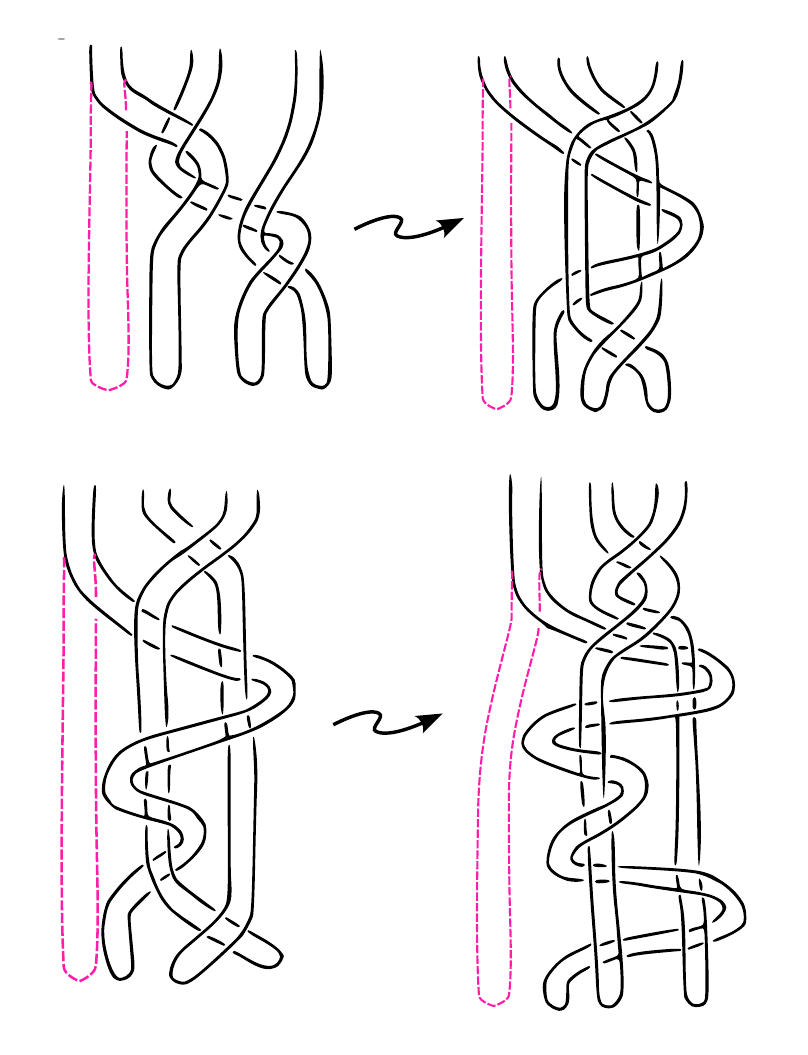}
\caption{Undoing a pocket move using double coset moves}
\label{pmdcm}
\end{figure}

\end{proof}

\subsubsection{The flip move}

\label{flipmove}

Figure ~\ref{flip} shows the \textbf{flip move on the index one unknot diagram}, which is the following isotopy (or its inverse): take point B ($(0,-2,0)$) and move it along the following path: 
$$ x = 0, y = \cos(\theta) -1, z = \sin(\theta)$$ where $ \pi \leq \theta \leq 3\pi$. 

%{\color{red} \bf not sure if this figure 9 needs to be here .  I think it might be better in section 2.2.  where do you refer to it?}

This path traces out the circle of radius one centered at $(0,-1,0)$, in the $y-z$ plane, moving clockwise if viewed from the region $x \geq 0$. The effect of this isotopy on the disc is shown in Figure~\ref{flip}.

\begin{figure}[ht!]
\centering
\labellist
\small\hair 2pt
\pinlabel $A$ at 132 368 
\pinlabel $A$ at 221 385
\pinlabel $A$ at 388 376
\pinlabel $A$ at 531 391
\pinlabel $B$ at 136 124
\pinlabel $B$ at 225 149
\pinlabel $B$ at 402 184
\pinlabel $B$ at 519 212
\pinlabel {saddle} at 538 279
\pinlabel {pt of minima} at 386 127
\pinlabel {pt of minima} at 524 162
\endlabellist
\centering
\includegraphics[scale=0.65]{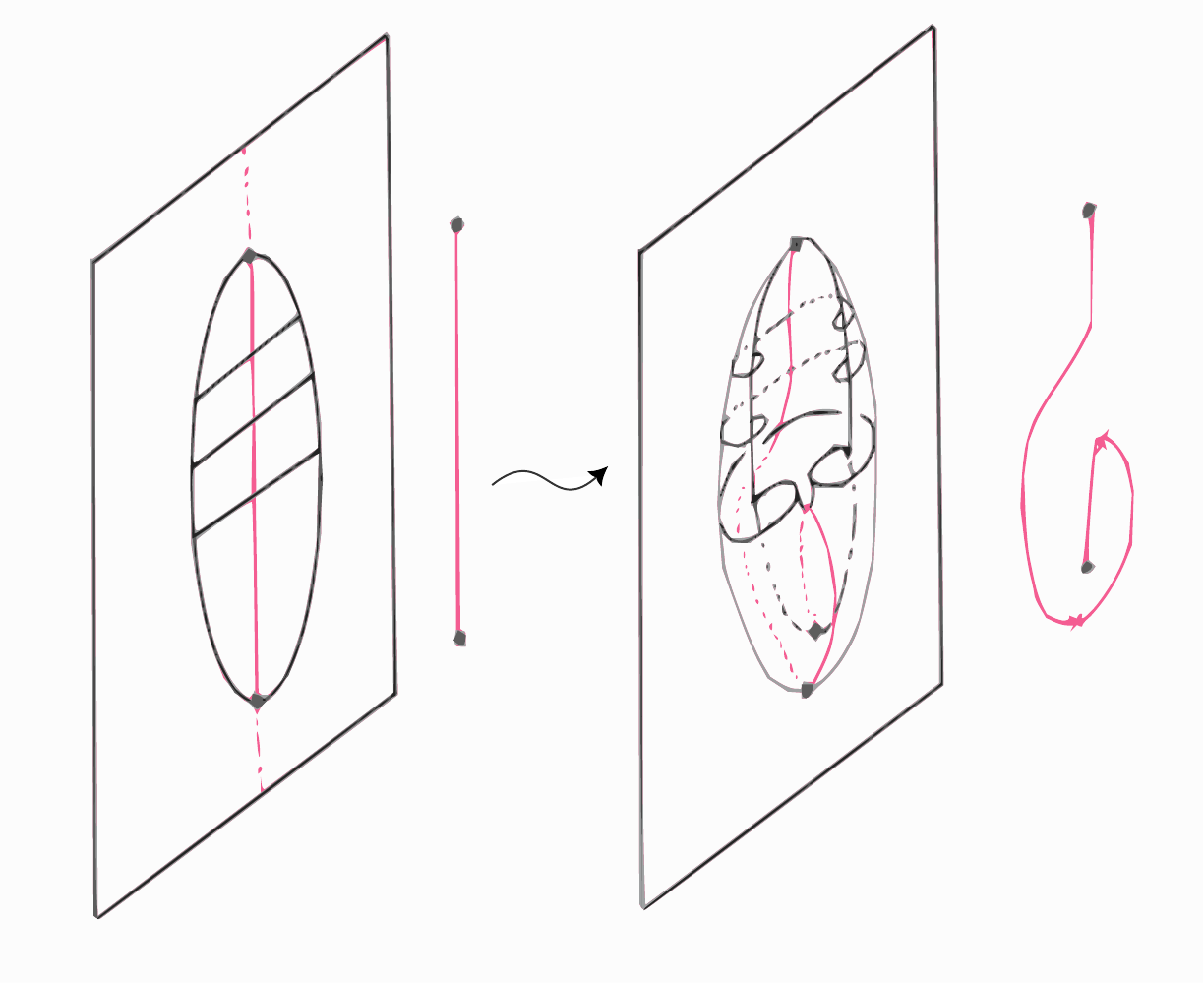}
\caption{The flip move on the 1-bridge 0-crossing diagram}
\label{flip}
\end{figure}

\begin{figure}[ht!]
\centering
\labellist
\small\hair 2pt
\pinlabel $A$ at 154 311 
\pinlabel $A$ at 472 311
\pinlabel $B$ at 157 102
\pinlabel $B$ at 474 102
\endlabellist
\centering
\includegraphics[width=12cm, height=6.5cm]{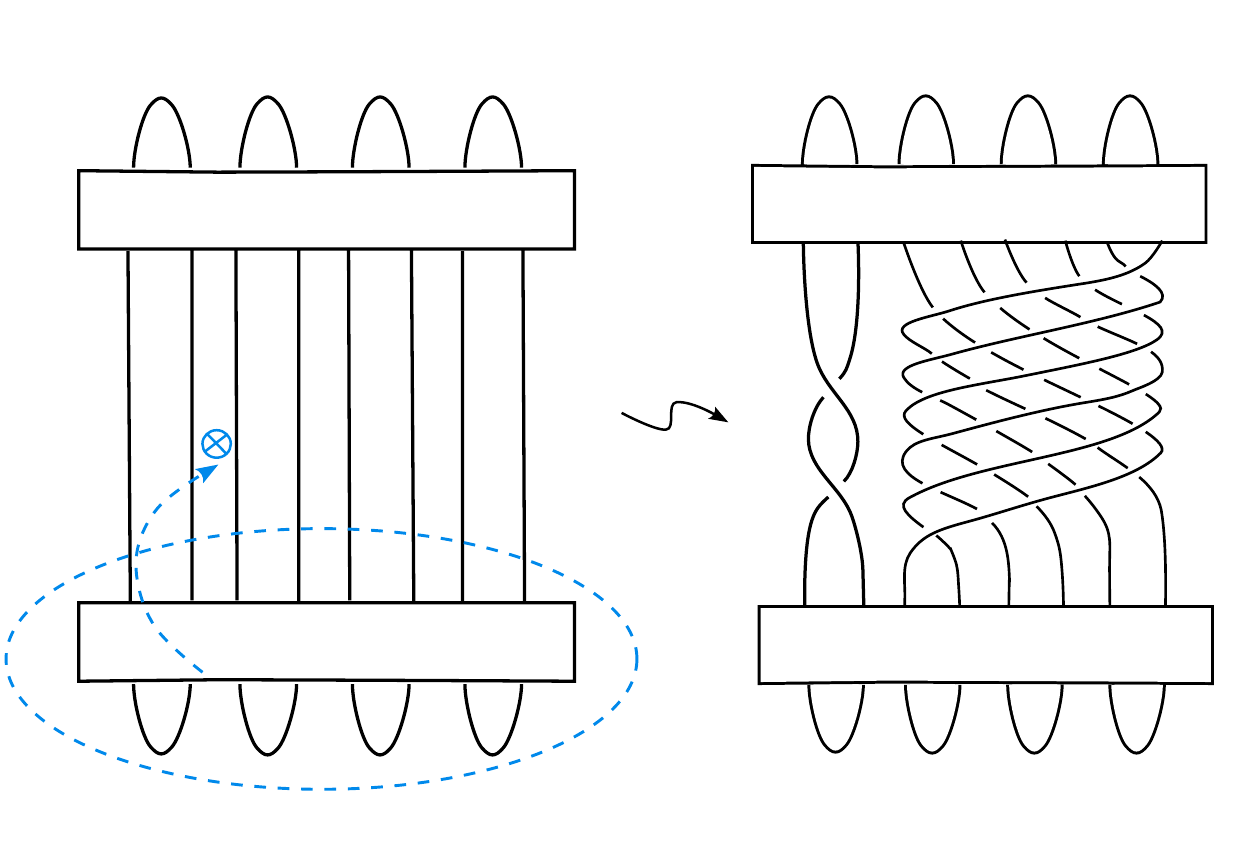}
\caption{The flip move on an 8 plat}
\label{flip_8}
\end{figure}

On a generic $2n$-plat with braid word $\mathbf{AB}$, here's how the flip move changes the braid word: \\
Case (i): Flipping $\mathbf{B}$ into the plane of the paper between the $k$-th and $(k+1)$-st strands: 
$$ \mathbf{AB} \lra \mathbf{A} {(\sigma_1 \sigma_2...\sigma_{k-1})}^{k} {((\sigma_{2n-1})^{-1}...(\sigma_{k+2})^{-1}(\sigma_{k+1})^{-1})}^{2n-k} \mathbf{B} $$ 
Case (ii): Flipping $\mathbf{B}$ out of the plane of the paper between the $k$-th and $(k+1)$-st strands: 
$$ \mathbf{AB} \lra \mathbf{A} {((\sigma_{k-1})^{-1}(\sigma_{k-2})^{-1}...(\sigma_{1})^{-1})}^{k} {(\sigma_{k+1} \sigma_{k+2}... \sigma_{2n-1})}^{2n-k} \mathbf{B} $$ 
Case (iii): Flipping $\mathbf{B}$ into the plane of the paper between the first two strands: 
$$ \mathbf{AB} \lra \mathbf{A} {((\sigma_{2n-1})^{-1} (\sigma_{2n-2})^{-1}... (\sigma_{3})^{-1} (\sigma_{2})^{-1})}^{2n-1} \mbf{B} $$ 
Case (iv): Flipping $\mbf{B}$ out of the plane of the paper between the first two strands: 
 $$ \mbf{AB} \lra \mbf{A} {(\sigma_2 \sigma_3... \sigma_{2n-2} \sigma_{2n-1})}^{2n-1} \mbf{B} $$
Case (v): Flipping $\mbf{B}$ into the plane of the paper between the last two strands: $$ \mbf{AB} \lra \mbf{A} {(\sigma_1 \sigma_2 ... \sigma_{2n-3} \sigma_{2n-2})}^{2n-1} \mbf{B} $$
Case (vi): Flipping $\mbf{B}$ out of the plane of the paper between the last two strands: 
$$ \mbf{AB} \lra \mbf{A} {((\sigma_{2n-2})^{-1} (\sigma_{2n-3})^{-1} ... (\sigma_{2})^{-1} (\sigma_{1})^{-1})}^{2n-1} \mbf{B} $$ 

Further, we define the \textbf{micro flip move}, wherein only $k$ (where $k$ is even) out of the $n$ strands are flipped. A pictorial representation is given in Figure~\ref{microflip}.

\begin{figure}[ht!]
\centering
\labellist
\small\hair 2pt
\pinlabel $A$ at 134 283
\pinlabel $A$ at 475 282
\pinlabel $B$ at 82 85
\pinlabel $B$ at 429 84
\endlabellist
\centering
\includegraphics[width=13.5cm, height=6cm]{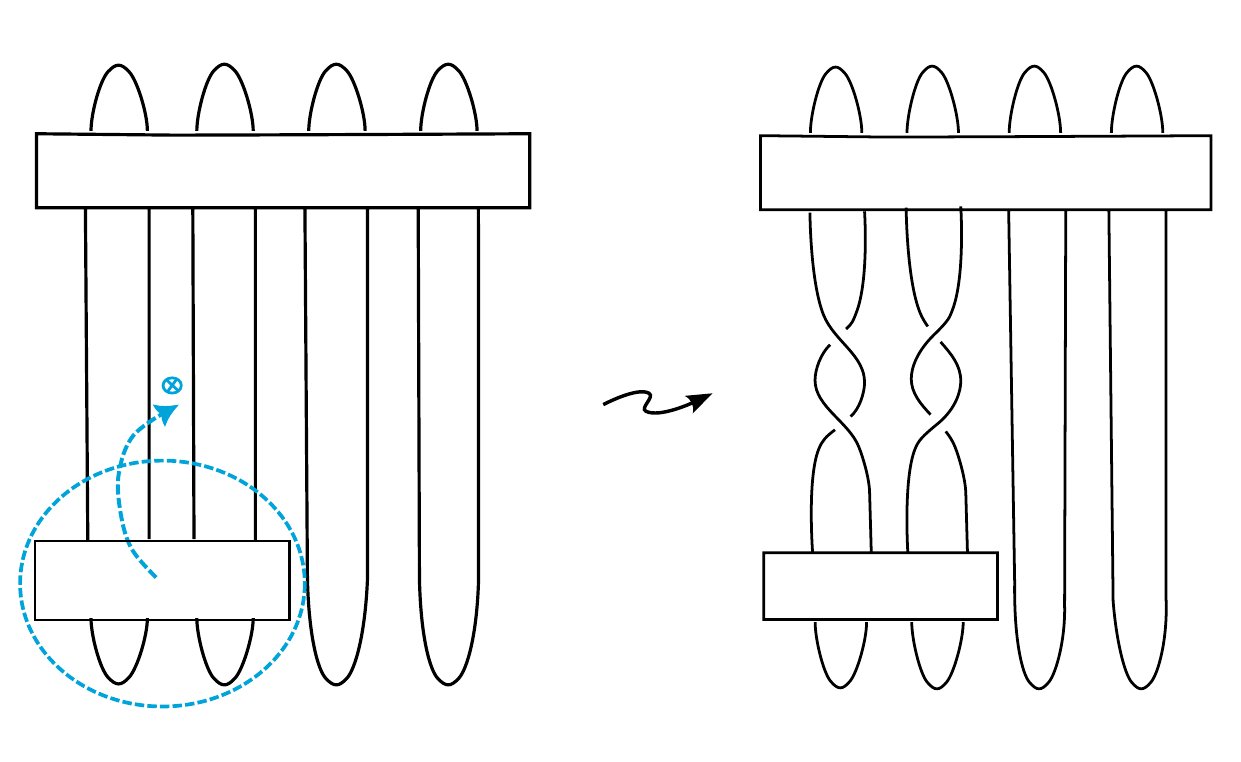}
\caption{An example of a Microflip move}
\label{microflip}
\end{figure}

Corresponding to the aforementioned cases, we also have six more cases where we do the flip with the top bridges. The change in braid word can be computed as shown above.
Figure~\ref{stab_seq} shows the corresponding sequence of moves using only braid isotopies, double coset moves and stabilization/destabilization, to go from the plat on the right to the plat on the left in Figure~\ref{flip_8}. Repeating the same sequence of moves on the first strand (from the left) in the last plat in Figure~\ref{stab_seq}, we get the left plat in the first row in Figure~\ref{garside_slide}. This plat is now defined by the braid $A \delta B$, where $\delta$ is the full twist in the braid group $\mathcal{B}_{2n}$, which generates the center of the braid group. Therefore, via braid isotopies, we can slide the braid word $B$ across the full twist to get the second plat, defined by $AB\delta$, in Figure~\ref{garside_slide}. Now, we can see that this plat can be resolved via double coset moves to give us the plat defined by $AB$, that we wanted to get to. 

The flip move allows us to turn different strands of the braid both ways simultaneously, which we can not achieve with just the double coset moves.

%{\color{red} \bf Explain the subtle difference between the pocket and flip moves, mentioning the sequence of level curves should be good enough here, I think}

\begin{figure}[ht!]
\centering
\labellist
\small\hair 2pt
\pinlabel $A$ at 120 513
\pinlabel $A$ at 361 515
\pinlabel $A$ at 603 515
\pinlabel $A$ at 106 233
\pinlabel $A$ at 357 233
\pinlabel $A$ at 624 233
\pinlabel $B$ at 120 358
\pinlabel $B$ at 361 356
\pinlabel $B$ at 603 357
\pinlabel $B$ at 106 74
\pinlabel $B$ at 357 75
\pinlabel $B$ at 624 73
\endlabellist
\centering
\includegraphics[width=18cm, height=13cm]{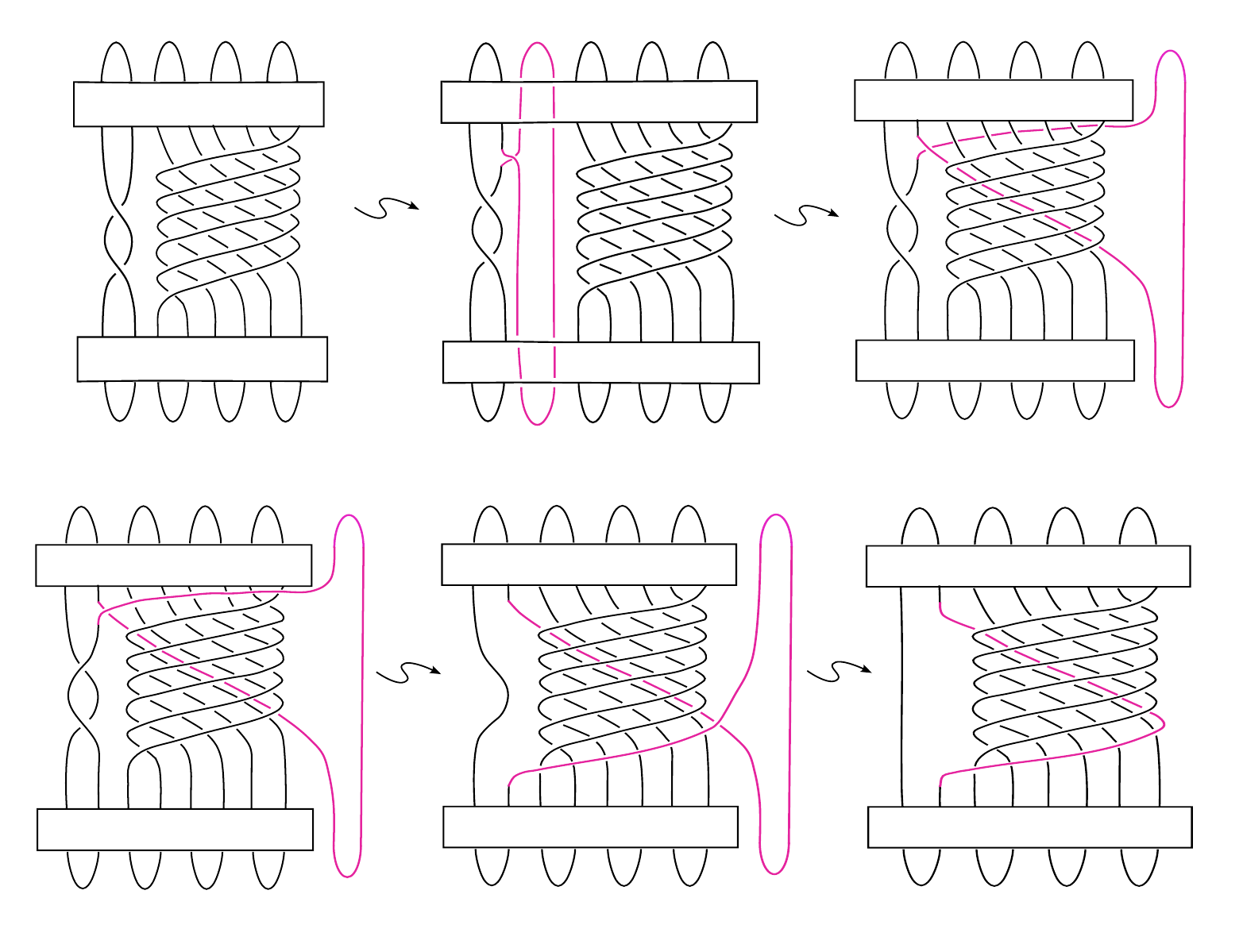}
\caption{Stabilization sequence for the flip move}
\label{stab_seq}
\end{figure}

\begin{figure}[ht!]
\centering
\labellist
\small\hair 2pt
\pinlabel $A$ at 139 683
\pinlabel $A$ at 296 292
\pinlabel $B$ at 140 457
\pinlabel $B$ at 293 95
\pinlabel $AB$ at 436 683
\endlabellist
\centering
\includegraphics[width=14.5cm, height=14.5cm]{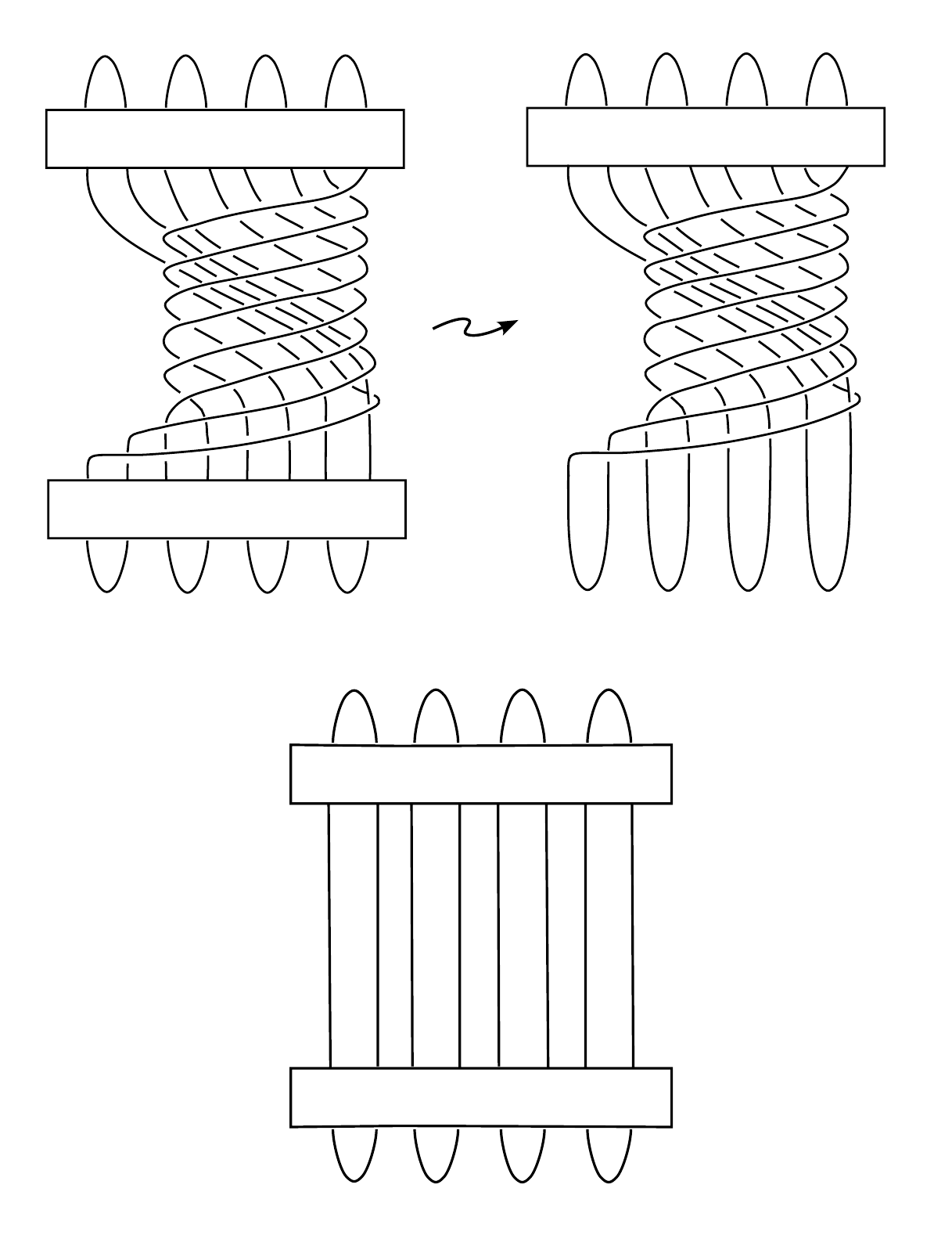}
\caption{Using double coset moves to undo the `turning' of a braid}
\label{garside_slide}
\end{figure}

\subsection{Disc Foliation and Tiling} \label{tilingandfoliation}

%TALK ABOUT THE COMPLEXITY FUNCTION AND REMOVING ESSENTIAL CIRCLES, STANDARD POSITION ETC HERE. OR THINK ABOUT WHETHER THIS SECTION IS NEEDED AT ALL?

As we remarked in \S \ref{level curves}, since the level curves $C_t^i$ are cross sections of $D_i$, taking a union of $C_t^i$'s over all $t$'s gives us the surface $D_i$. Therefore, ${D_i} = \bigsqcup_{t \in [-0.25,1.25]} C_t^i$. This gives a singular foliation of the surface $D_i$, where the leaves of the foliation are arcs $a_{i,t}^j$'s and curves $s_{i,t}^j$'s. The singular leaves are exactly the  intersecting (which includes self intersections) arcs and curves, of which there are only finitely many. The singular leaves correspond to saddles on the surface. Further, self intersecting arcs or closed curves give rise to curves which either self intersect again or  degenerate to a single point in the foliation, which corresponds to a local maxima or local minima on $D_i$. If $\mbf{K_i}$ is a plat of index $n$, then there are $n$ points of maxima and $n$ points of minima on the boundary of $D_i$, which is $\mbf{K_i}$. As mentioned earlier, these points of maxima are at height $t =1$ and the minima are at height $t=0$. 

Using this singular foliation, we will now describe a way to tile $D_i$. We thicken up the singular leaves such that the union of the `neighborhoods' of the singular leaves then determines a tiling of $D_i$. So, a tile $T$ on the disc is a thickened up neighborhood of a singular leaf $s$, i.e. a tile $T$ is the surface $s \times I$, where $I$ is the unit interval. Thus, each tile on the disc contains exactly one singularity (saddle or local min/max). Any tile $T$ has a boundary comprising of arcs and circles, where the arcs might lie on the boundary of the surface (i.e. the unknot) or in the interior of $D_i$. A tile which has $a$ edges lying on the knot, $b$ lying in the interior of the surface, and $c$ boundary circles will be denoted as: \bm{$T_{(a,b,c)}$}. 

Below we give a detailed description of all the possible singularities (and hence, tiles) which show up in the foliation (and tiling) of $\mbf{D}$. In the figures referenced below, we follow the convention that for the tiles, black line segments on the boundary of the tile are the ones lying on the unknot and the blue line segments or simple closed curves on the boundary of the tile are the ones lying in in the interior of the spanning disc. Keeping in line with this convention, for the local 3-D renderings of the tiles also, black line segments represent the unknot. Also, note that each tile is foliated by the level curves $C_t^i$, in a manner so that each tile contains exactly one self intersecting singular leaf or exactly one pair of intersecting singular leaves. With that in mind, we have the following tiles: 

(i) \textbf{Intersection of two arcs}: Figure~\ref{alpha} shows this type of singularity, with the first row describing the level curves associated with it and the second row shows the tile and the local 3-D embedding of the spanning disc associated with it. We call this singularity, and abusing notation, also the tile generated by it, \bm{$T_{(4,4,0)}$}. The tile \bm{$T_{(4,4,0)}$} is a disc with 8 edges, 4 of which lie on the knot (i.e. the boundary of $\mbf{D}$), denoted by black edges in the tile, and the other 4 edges lie in the interior of the disc, glued to the edges of tiles coming from the other singularities. The black straight lines in the local 3-D embedding depict the unknot.

Note that, for a generic $n$-bridge diagram, we could conceivably have more than two arcs intersecting at the same level but we can get rid of such a saddle via an isotopy so that at any level we have no more than one singularity. We can do this because the height function on $\mbf{D}$ is a Morse function.

%\begin{figure}[h]

%\centering

%\includegraphics[width=12cm, height=4cm]{arcandarc.png}
%\caption{Two arcs intersecting and its 3-D embedding}

%\end{figure}

\begin{figure}[h!]
\labellist
\small\hair 2pt
\pinlabel {before saddle}  at 189 393 
\pinlabel {at level of saddle} at 467 393
\pinlabel {after saddle} at 732 393
\pinlabel {tile $\bm{T_{(4,4,0)}}$} at 190 63
\pinlabel {3-D embedding of $\bm{T_{(4,4,0)}}$} at 638 33
\endlabellist
\centering
\includegraphics[width=10cm, height=7cm]{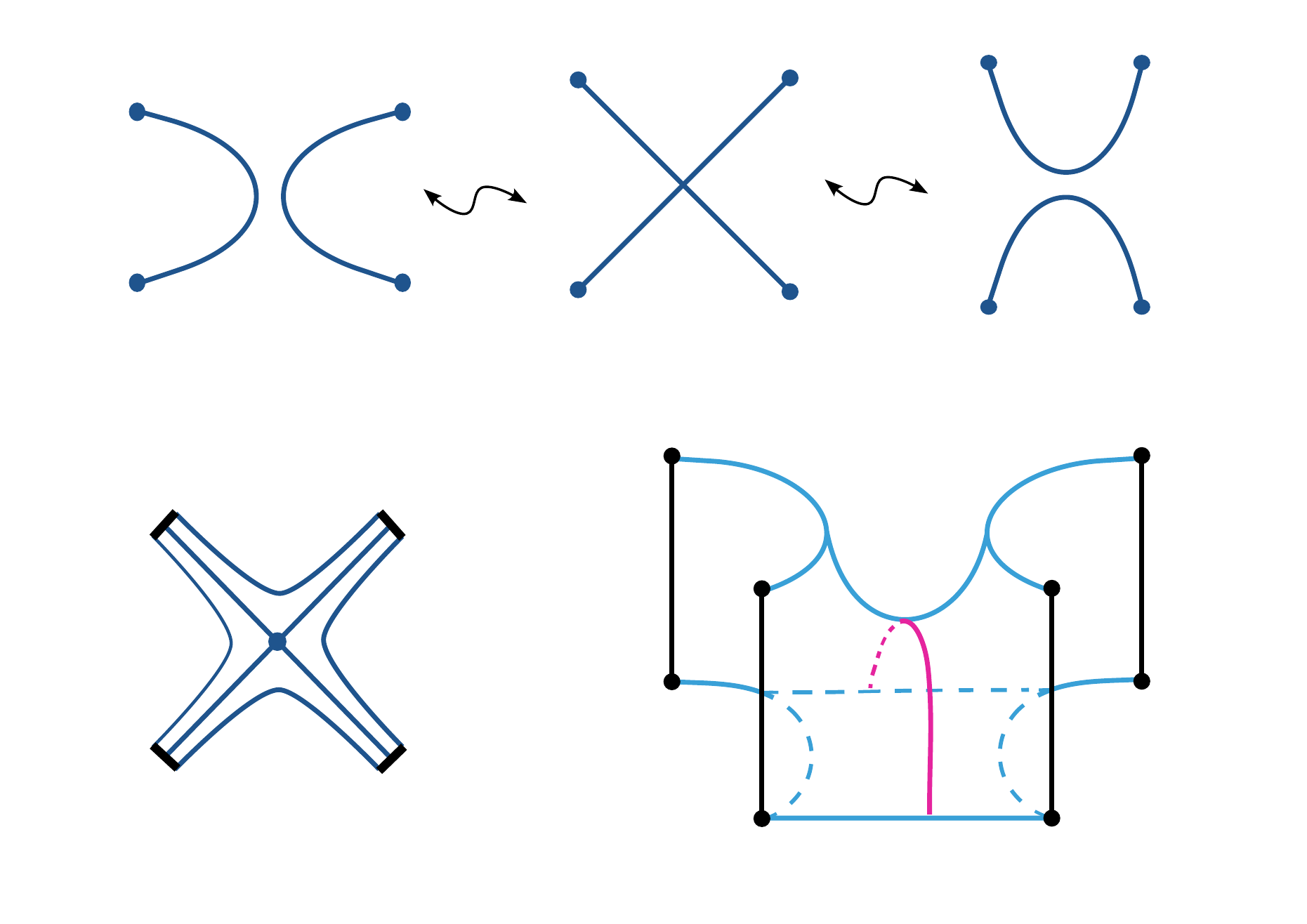}
\caption{Intersection of two arcs - $\bm{T_{(4,4,0)}}$}
\label{alpha}
\end{figure}

(ii) \textbf{Arc intersecting itself (or arc intersecting a circle)}: This is described in Figure~\ref{beta} and we call this singularity (and the respective tile it generates, as before) \bm{$T_{(2,2,1)}$}. The first row of level curves is corresponding to the pocket move and the second row corresponds to the flip move. This is a key difference between the two moves, other differences, although they do show up in the foliation, are very subtle and thus harder to recognise. The last row shows the two possible local 3-D embeddings of the spanning disc associated with \bm{$T_{(2,2,1)}$}, the one on the left corresponds to the pocket move and the first row of level curves, and the second one corresponds to the flip move and the second row of level curves. 

The tile \bm{$T_{(2,2,1)}$} is an annulus with 4 edges and one boundary circle, such that 2 edges (colored black in the tile in Figure~\ref{beta}) lie on the knot, and the other 2 lie in the interior of $\mbf{D}$, glued to the edges of some other tiles. As before, the straight black lines in the local 3-D embedding denote the unknot and the green region in the tile is {\em not} part of the tile.

%\begin{figure}[h]

%\centering

%\includegraphics[width=13cm, height=6cm]{arcwitself.png}
%\caption{Arc intersecting itself and its 3-D embedding}

%\end{figure}

\begin{figure}[h]
\labellist
\small\hair 2pt
\pinlabel {before saddle}  at 98 165 
\pinlabel {at level of saddle} at 289 165
\pinlabel {after saddle} at 446 165
\pinlabel {tile \bm{$T_{(2,2,1)}$}} at 100 17
\pinlabel {3-D embeddings of \bm{$T_{(2,2,1)}$}} at 371 17

\endlabellist
\centering
\includegraphics[width=12cm, height=9cm]{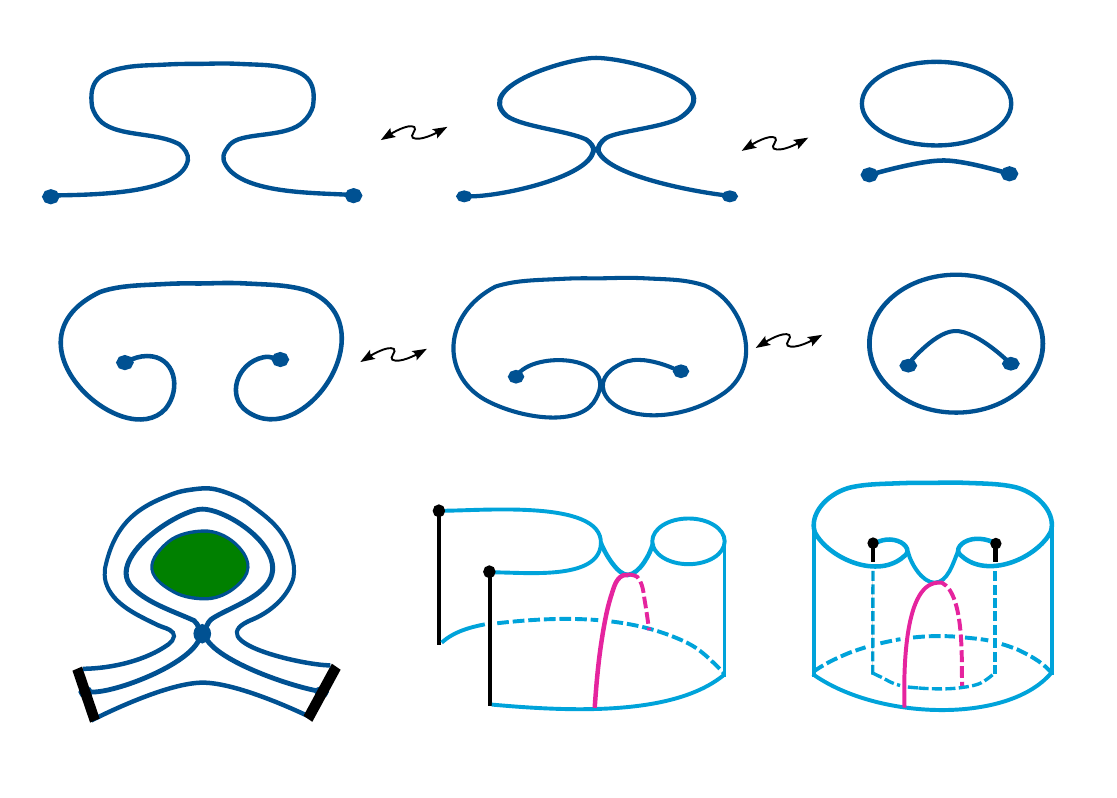}
\caption{Arc intersecting itself - \bm{$T_{(2,2,1)}$}}
\label{beta}
\end{figure}

(iii) \textbf{Circle intersects circle}: Figure~\ref{gamma} depicts the singularity (and the tile generated by it) \bm{$T_{(0,0,3)}$}. The first row of level curves comes from a nested sequence of two pocket moves and the second one comes from a { \em pocket move} followed by a flip move. As before, the top row corresponds to the 3-D embedding on the left and the bottom row corresponds to the 3-D embedding on the right. The tile \bm{$T_{(0,0,3)}$} is a pair of pants with three boundary circles, all in the interior of the disc. The green shaded portion shown in Figure~\ref{gamma} is NOT a part of the tile. 

%\begin{figure}[h]

%\centering

%\includegraphics[width=13cm, height=6cm]{circlewcircle.png}
%\caption{Circle intersecting circle and its 3-D embedding}

%\end{figure}

\begin{figure}[h!]
\labellist
\small\hair 2pt
\pinlabel {before saddle}  at 134 208
\pinlabel {at level of saddle} at 368 208
\pinlabel {after saddle} at 607 210
\pinlabel {tile \bm{$T_{(0,0,3)}$}} at 136 32
\pinlabel {3-D embeddings of \bm{$T_{(0,0,3)}$}} at 492 32
\endlabellist
\centering
\includegraphics[width=14cm, height=9cm]{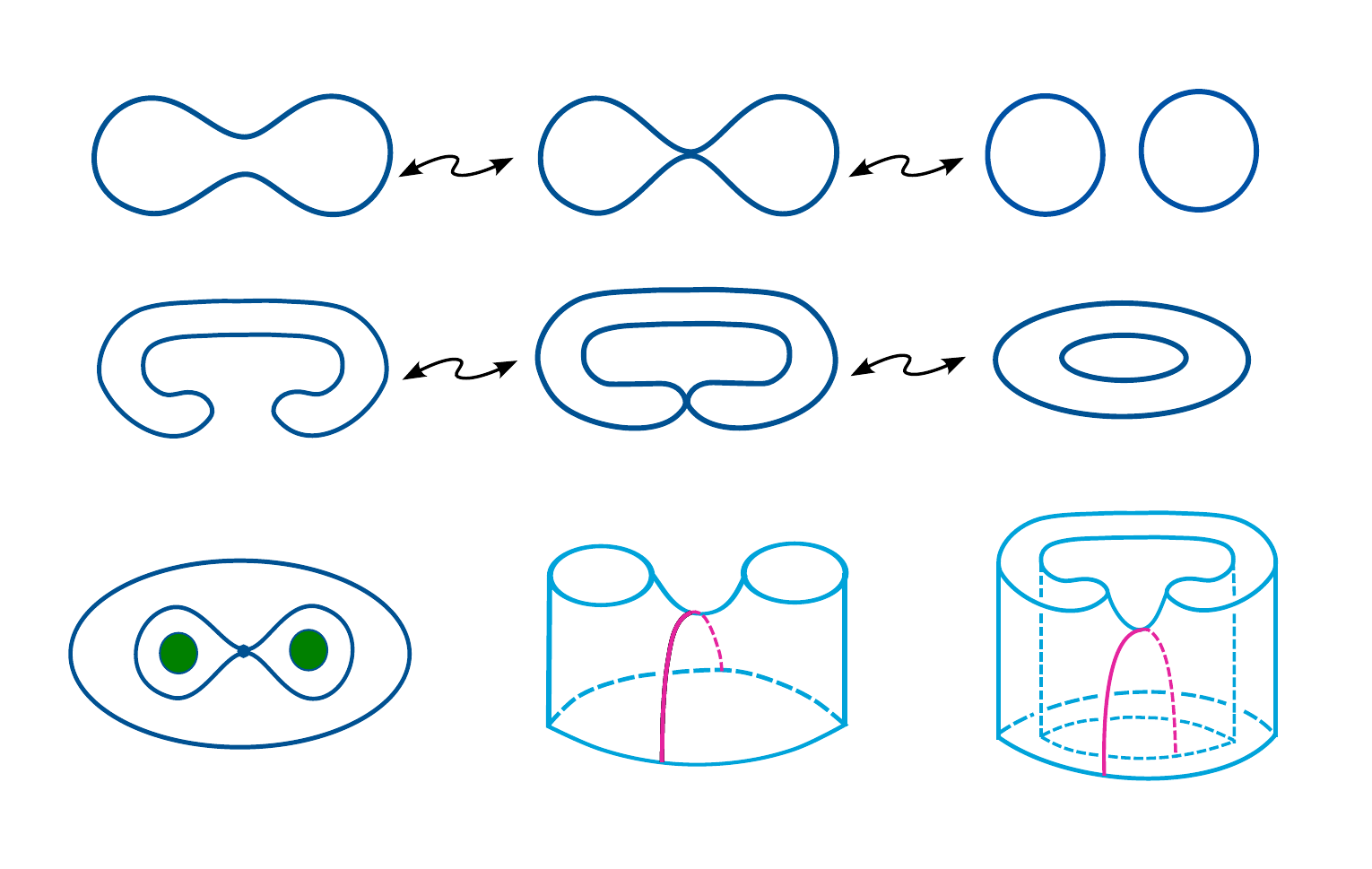}
\caption{Circle intersecting itself - \bm{$T_{(0,0,3)}$}}
\label{gamma}
\end{figure}

We classify the saddles corresponding to the tiles of type \bm{$T_{(2,2,1)}$} and \bm{$T_{(0,0,3)}$} into two categories: 

\begin{definition}
a) \textbf{Up saddle}: A saddle such that two curves pass through the saddle to join and become one curve, going in the direction of decreasing `$t$' i.e. moving down the plat height function. \\
In Figures ~\ref{beta} and ~\ref{gamma}, in the last row of both these figures, the 3-D embedding of the tile on the left both depict up saddles.

b) \textbf{Down saddle}: A saddle such that one curve passes through the saddle to split into two, going in the direction of decreasing `$t$'.\\
In Figures ~\ref{beta} and ~\ref{gamma}, in the last row of both these figures, the 3-D embedding of the tile on the right both depict down saddles.
\end{definition}

(iv), (v) \textbf{Points of extrema}: The last two singularities, described in Figure~\ref{deltaandtheta}, are points of minima (or maxima) on $\mbf{D}$. \bm{$T_{(0,0,1)}$} has the point of extrema lying in the interior $\mbf{D}$, away from the boundary, which basically means that a neighborhood of \bm{$T_{(0,0,1)}$} is just capping off a simple closed curve in the foliation of $\mbf{D}$. The tile \bm{$T_{(0,0,1)}$} is a disc with a circle boundary lying in the interior of $\mbf{D}$. We call a tile of type \bm{$T_{(0,0,1)}$}, a \textbf{min tile of type \bm{$T_{(0,0,1)}$}} (respectively, \textbf{max tile of type \bm{$T_{(0,0,1)}$}}), if the singularity in the tile is a local min (respectively local max).

\bm{$T_{(1,1,0)}$} has the point of extrema lying on the unknot itself, which means its neighborhood is one of the top or the bottom bridges. The tile \bm{$T_{(1,1,0)}$} is a disc with boundary as a straight line glued to a semicircle, refer Figure~\ref{deltaandtheta}. The semicircle lies on the boundary of \bm{$\mbf{D}$} (colored black in the Figure), while the straight line is in the interior of $\mbf{D}$, hence shown as blue in the Figure. Abusing notation, we also call a tile of type \bm{$T_{(1,1,0)}$}, a \textbf{min tile of type \bm{$T_{(1,1,0)}$}} (respectively, \textbf{max tile of type \bm{$T_{(1,1,0)}$}}), if the singularity in the tile is a local min (respectively local max).

\begin{figure}[h!]
\labellist
\small\hair 2pt
\pinlabel {tile \bm{$T_{(0,0,1)}$}}  at 56 37
\pinlabel {3-D embedding of \bm{$T_{(0,0,1)}$}} at 238 37
%\pinlabel {after saddle} at 461 216
\pinlabel {tile \bm{$T_{(1,1,0)}$}} at 499 37
\pinlabel {3-D embedding of \bm{$T_{(1,1,0)}$}} at 683 37
\endlabellist
\centering
\includegraphics[width=13cm, height=4cm]{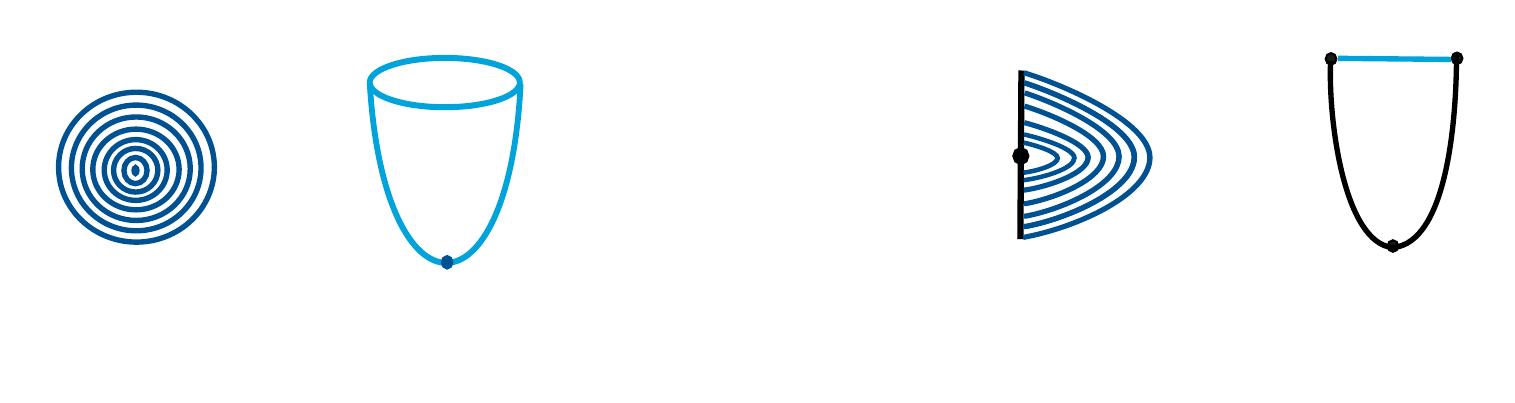}
\caption{Points of extrema - \bm{$T_{(0,0,1)}$} and \bm{$T_{(1,1,0)}$}}
\label{deltaandtheta}
\end{figure}

Note that the 3-D embeddings shown in the Figures in this section only depict the local geometric realisation of the spanning disc up to horizontal inversions, that is to say that saddles on the disc bounded by the unknot can also show up as horizontal inversions of the 3-D embeddings of the tiles depicted in the Figures in this section.

\subsection{Graph and complexity function} \label{graphcomplexity}

%{\color{red} \bf argue graph is finite?}

Given the tiling of a disc embedding $\mbf{D_i}$, we will now define a directed graph $G_{D_i}$ associated to it in the following way:

The set of vertices of the graph is the set of singularities (or tiles) in the foliation of the disc embedding $\mbf{D_i}$. There is an edge going from vertex $a$ to vertex $b$ if the tiles corresponding to the singularities are glued via an arc or a circle in the tiling of the disc and the singularity in the tile $a$ is at a larger $t$-value than the singularity in the tile $b$. Therefore, the valence of a tile $T_{(a,b,c)}$ is $b+c$. 

Notice that every edge in the graph corresponds to either an arc from boundary to boundary (of the disc) or a simple closed curve, both of which are separating curves on the disc. Now, if we had a cycle in the graph, that would give us a way to build a non-separating curve on the disc (because the boundary components of each tile $\mbf{\alpha}$ which do not lie on the unknot separate the disc into connected components and each vertex which shares an edge with $\mbf{\alpha}$ has to be in a different connected component of $D_i - \alpha$), which is impossible. Therefore, $G_{D_i}$ {\em is a tree for any embedding $D_i$}. 

We are now in a position to define the complexity function. Let $|T_{(4,4,0)}|$, $|T_{(0,0,1)}|$ be the number of singularities $T_{(4,4,0)}$, $T_{(0,0,1)}$ occurring on $D_i$. Then, define the complexity function on $D_i$ as follows: \\
$$ c(\mathbf{D_i}) = (|T_{(4,4,0)}|, |T_{(0,0,1)}|)$$ 
Lemma \ref{bla} in \S \ref{mainresult} tells us why it makes sense to define the complexity function this way.

\section{\centering The main result} \label{mainresult}

We start by proving the following Lemma: 

\begin{lemma}

Let $S_i$, $i = 1, 2, ..., n$ be $n$ simplicial complexes of degree 2 which are glued together to make a surface $S$. Let there be $|S_i|$ many $S_i$'s such that each such piece is glued in the same manner. Let $\chi(S_i)$ be the Euler characteristic of $S_i$, $e(S_i)$ be the number of edges of $S_i$ glued to an edge of $S_j$, for some j, where $ i \neq j$. Then, we have:

$$ \chi(S) = \sum_{i=1}^{n} |S_i| \chi(S_i) - \frac{1}{2} (\sum_{i=1}^{n} |S_i|e(S_i)) $$

\label{surfaceec}

\end{lemma}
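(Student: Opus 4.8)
The plan is to prove this purely by bookkeeping of cells, using that the Euler characteristic of a finite $2$-complex is $\chi = V - E + F$ (vertices minus edges plus faces) and that it is additive under gluing along subcomplexes. I would begin not with $S$ but with the disjoint union $X_0$ of all the pieces, namely $|S_i|$ disjoint copies of each $S_i$. Since Euler characteristic is additive over disjoint unions, $\chi(X_0) = \sum_{i=1}^n |S_i|\chi(S_i)$, which is exactly the first term on the right-hand side. The surface $S$ is then recovered from $X_0$ by performing all the edge identifications prescribed by the gluing pattern, and the whole content of the lemma is to compute how much $\chi$ drops during these identifications.

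Next I would count the gluings. Here an ``edge'' means a $1$-simplex (an arc), and $e(S_i)$ records how many edges of a single copy of $S_i$ are identified with an edge of another piece; boundary circles, when glued, are not $1$-simplices and are handled separately below. Summing over all copies, $\sum_{i=1}^n |S_i| e(S_i)$ is the total number of glued edges counted once from each piece on which the edge appears. Because every identification pairs exactly two edges (one contributed by each of the two pieces meeting along it), the number of distinct identification operations is $g = \tfrac12\sum_{i=1}^n |S_i| e(S_i)$, which accounts for the factor $\tfrac12$.

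The core step is to check that each single edge identification lowers $\chi$ by exactly $1$. Identifying a pair of arcs merges two $1$-cells into one, so $\Delta E = -1$, and identifies their two pairs of endpoints, so $\Delta V = -2$; no $2$-cells are affected, so $\Delta F = 0$. Hence $\Delta\chi = \Delta V - \Delta E + \Delta F = -2 - (-1) + 0 = -1$. Performing all $g$ identifications therefore gives
\[
\chi(S) = \chi(X_0) - g = \sum_{i=1}^n |S_i|\chi(S_i) - \tfrac12\sum_{i=1}^n |S_i| e(S_i),
\]
as claimed. For a gluing along a boundary circle one has instead $\Delta E = \Delta V = 0$ (equivalently $\chi(S^1) = 0$), so such gluings leave $\chi$ unchanged and correctly do not appear in the formula.

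The step I expect to be the main obstacle is the vertex count $\Delta V = -2$ per edge gluing: this is exactly right only when the two endpoints being identified are distinct vertices that have not already been merged by an earlier identification (if two glued arcs shared an endpoint one would instead get $\Delta\chi = 0$). In the tiling of the spanning disc this non-degeneracy is automatic, since the glued arcs are level-curve cross-sections whose endpoints are distinct points of the knot, and the circle gluings (having no vertices) never interfere. If one wants to make the inductive build-up completely clean, one can invoke the fact, established in \S\ref{graphcomplexity}, that $G_{D_i}$ is a tree: order the tiles so that each is attached to the union of its predecessors along a single connected arc or circle, and apply $\chi(A\cup B) = \chi(A) + \chi(B) - \chi(A\cap B)$ at each stage with $\chi(\text{arc}) = 1$ and $\chi(\text{circle}) = 0$.
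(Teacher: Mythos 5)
Your proof is correct and follows essentially the same route as the paper's: both reduce the identity to additivity of the Euler characteristic under gluing, with each identified arc contributing $-1$ (via $\Delta V = -2$, $\Delta E = -1$) and each identified circle contributing $0$, the factor $\tfrac{1}{2}$ arising because each gluing is counted once from each of the two pieces it joins. The paper organizes this as a pairwise cut-and-count with induction whereas you run a single global count of identifications starting from the disjoint union, but the underlying computation is identical.
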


\begin{proof}

For any simplicial complex $C$, let $n_i(C)$ be the number of $i$-cells in $C$. \\
Case(i): $S$ is obtained from $S_1$, $S_2$ by gluing $S_1$ with $S_2$ via circles, therefore $e(S_1) = e(S_2) =0$. Let $\tau$ be the gluing simple closed curve.\\
Now, let $S$ have a cell decomposition in which the circle $\tau$ is a subcomplex, and so $\tau$ consists of $k$ 0-dimensional cells and $k$ 1-dimensional cells for some $k \geq 1$. \\
Cutting $S$ along $\tau$ to get $S_1 \bigcup S_2$ and counting cells, we get:
\begin{equation}
n_0(S) = n_0(S_1) + n_0(S_2) - k
\end{equation}  
\begin{equation}
n_1(S) = n_1(S_1) + n_1(S_2) - k
\end{equation}
\begin{equation}
n_2(S) = n_2(S_1) + n_2(S_2)
\end{equation}
(1) - (2) + (3) gives us: 
$$ \chi(S) = \chi(S_1) + \chi(S_2) $$

Case(ii): $S$ is obtained from $S_1$, $S_2$ by gluing $S_1$ with $S_2$ via arcs, say, $\tau_i, i = 1,2, ..., p$ . As before, let $S$ have a cell decomposition in which the attaching arcs are subcomplexes. \\
Then $n_0(\tau_i) = k_i + 1$ and $n_1(\tau_i) = k$ for $i =1, 2, ..., p$. Again, cutting $S$ along these arcs and counting cells, we get: 

\begin{equation}
n_0(S) = n_0(S_1) + n_0(S_2) - ( k_1 + 1 + k_2 + 1 ... + k_p + 1) \\
= n_0(S_1) + n_0(S_2) - p - \sum_{i=1}^{p} k_i
\end{equation}
\begin{equation}
n_1(S) = n_1(S_1) + n_1(S_2) - ( k_1 + k_2 ... + k_p ) \\
= n_1(S_1) + n_0(S_2)  - \sum_{i=1}^{p} k_i
\end{equation}
\begin{equation}
n_2(S) = n_2(S_1) + n_2(S_2)
\end{equation}
(4) - (5) + (6) gives us: \\ 
\begin{align*}
\chi(S) &= \chi(S_1) + \chi(S_2) - p \\
          &= \chi(S_1) + \chi(S_2) - \frac{1}{2}(2p) \\
          &= |S_1|\chi(S_1) + |S_2|\chi(S_2) - \frac{1}{2}(|S_1| e(S_1) + |S_2| e(S_2)) \\
\end{align*}
         
Case(i) and case(ii) along with induction prove the lemma.
 
\end{proof}

%Let $\mbf{K}$ be an n-bridge representation of the unknot. Then, $\mbf{K}$ bounds a disc, say $\mbf{D}$.\\
%Let $h$ be the height function on $\mbf{D}$ such that the top bridges have a height 1, and bottom bridges are at height 0. \\
%Let $\mbf{S_t}$ be the level sphere at level $t$, for any $0 \leq t \leq 1$. \\
%Intersection of $\mbf{D}$ with $\mbf{S_t}, 0 \leq t \leq 1$ foliates the disc and we study the foliation of $\mbf{D}$ to figure out how to untangle the diagram. \\

\begin{lemma}

 $|T_{(0,0,1)}| = |T_{(2,2,1)}| + |T_{(0,0,3)}|$ \\

\label{bla}
\end{lemma}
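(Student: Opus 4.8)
The plan is to extract two independent linear relations among the five tile-counts and combine them so that the counts we do not want cancel, leaving exactly $|T_{(0,0,1)}| = |T_{(2,2,1)}| + |T_{(0,0,3)}|$. For brevity let me write $A,B,C,D,E$ for $|T_{(4,4,0)}|,|T_{(2,2,1)}|,|T_{(0,0,3)}|,|T_{(0,0,1)}|,|T_{(1,1,0)}|$ in that order. The whole proof is then a two-equations-two-cancellations calculation; no geometry beyond what is already set up is needed.

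The first relation comes from Lemma \ref{surfaceec} applied to the tiling of $D_i$, together with $\chi(D_i)=1$ (it is a disc). For each tile type I would record its Euler characteristic, namely $\chi(T_{(4,4,0)})=1$ (disc), $\chi(T_{(2,2,1)})=0$ (annulus), $\chi(T_{(0,0,3)})=-1$ (pair of pants), and $\chi(T_{(0,0,1)})=\chi(T_{(1,1,0)})=1$ (discs); and its gluing number $e$, which by the convention fixed in Lemma \ref{surfaceec} counts only edges glued along \emph{arcs} and ignores circle gluings, so $e$ is the middle index $b$ of $T_{(a,b,c)}$, giving $4,2,0,0,1$. Substituting into $\chi(S)=\sum|S_i|\chi(S_i)-\tfrac12\sum|S_i|e(S_i)$ and clearing the factor $\tfrac12$ produces the relation
$$2A + 2B + 2C - 2D - E = -2.$$

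The second relation is purely combinatorial and uses that $G_{D_i}$ is a tree, as established in \S\ref{graphcomplexity}. A tree on $V$ vertices has $V-1$ edges, so the sum of the vertex valences equals $2(V-1)$. Since the valence of $T_{(a,b,c)}$ is $b+c$, the valences of the five types are $4,3,3,1,1$; summing over all vertices with $V=A+B+C+D+E$ and simplifying gives
$$2A + B + C - D - E = -2.$$

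Subtracting the second relation from the first makes both $A=|T_{(4,4,0)}|$ and $E=|T_{(1,1,0)}|$ drop out simultaneously, leaving $B+C-D=0$, which is exactly the claim. The only real obstacle is reading off the two families of invariants correctly: the key point is the deliberate mismatch that a shared boundary circle contributes to the valence $b+c$ (hence to the tree relation) but \emph{not} to the Euler-characteristic gluing count $e=b$, and it is precisely this mismatch that forces the $T_{(4,4,0)}$ and $T_{(1,1,0)}$ terms to cancel rather than having to be evaluated separately. To guard against a sign or weighting slip I would sanity-check the bookkeeping on the standard $1$-bridge unknot, where $A=B=C=D=0$ and $E=2$, and verify that both displayed relations reduce to $-2=-2$ and that $D=B+C$ holds trivially.
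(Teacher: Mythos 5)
Your argument is correct, and it diverges from the paper's proof in its second half. The first relation you extract is exactly the paper's: applying Lemma \ref{surfaceec} to the tiling of the disc with $e(T_{(a,b,c)})=b$ (circle gluings costing nothing in $\chi$) gives $-|T_{(4,4,0)}|-|T_{(2,2,1)}|-|T_{(0,0,3)}|+|T_{(0,0,1)}|+\tfrac12|T_{(1,1,0)}|=1$. Where you part ways is in how the terms $|T_{(4,4,0)}|$ and $|T_{(1,1,0)}|$ are disposed of. The paper evaluates them directly by a geometric count: $|T_{(1,1,0)}|=2n$ (the bridges) and $|T_{(4,4,0)}|=n-1$ (each arc-arc saddle raises the bridge index by one), so that $\tfrac12|T_{(1,1,0)}|-|T_{(4,4,0)}|=1$ and the lemma drops out. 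You instead invoke the handshake lemma on the tree $G_{D_i}$ of \S\ref{graphcomplexity} --- sum of valences $b+c$ equals $2(V-1)$ --- to get a second linear relation in which the coefficients of $|T_{(4,4,0)}|$ and $|T_{(1,1,0)}|$ match those of the first, so both cancel on subtraction. Your route is arguably tighter: it avoids the bridge-index count, which the paper argues somewhat informally, and it makes transparent that the identity is forced by the mismatch between circle gluings (which count toward valence but not toward the Euler-characteristic correction) and arc gluings (which count toward both). It does lean on two facts from \S\ref{graphcomplexity} that you should cite explicitly: that $G_{D_i}$ is a connected tree, and that each interior arc or circle of a tile is glued to exactly one other tile, so the valence really is $b+c$ with no multi-edges or self-gluings. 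One small trade-off: the paper's computation of $|T_{(4,4,0)}|=n-1$ is reused later (for instance, the proof of Lemma \ref{lemma5} counts $|T_{(4,4,0)}|+1$ min tiles of type $T_{(1,1,0)}$), so that geometric fact still has to be established somewhere even if your proof of Lemma \ref{bla} does not need it.
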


\begin{proof} 

From Lemma~\ref{surfaceec}, we have the following equation: \\

\begin{align*}
 \chi(S) &= \sum_{i=1}^{n} |S_i| \chi(S_i) - \frac{1}{2} (\sum_{i=1}^{n} |S_i|e(S_i))  \\
          &= \sum_{i=1}^{n} |S_i|[\chi(S_i) - \frac{1}{2} e(S_i)] \\
\end{align*}

As stated above, we glue together neighborhoods of the 5 critical points to recover the disc bounded by the unknot and then applying this equation on that construction, we get: \\
\begin{align*}
1 &= |T_{(4,4,0)}|[\chi(T_{(4,4,0)})-\frac{1}{2}e(T_{(4,4,0)})] + |T_{(2,2,1)}|[\chi(T_{(2,2,1)})-\frac{1}{2}e(T_{(2,2,1)})] + |T_{(0,0,3)}|[\chi(T_{(0,0,3)})-\frac{1}{2}e(T_{(0,0,3)})] \\
   & +|T_{(0,0,1)}|[\chi(T_{(0,0,1)})-\frac{1}{2}e(T_{(0,0,1)})] + |T_{(1,1,0)}|[\chi(T_{(1,1,0)})-\frac{1}{2}e(T_{(1,1,0)})] \\
  &= |T_{(4,4,0)}|[1 - \frac{1}{2}(4)] + |T_{(2,2,1)}|[0-\frac{1}{2}(2)] + |T_{(0,0,3)}|[-1 - \frac{1}{2}(e(T_{(0,0,3)}))] + |T_{(0,0,1)}|[1-\frac{1}{2}(0)] + |T_{(1,1,0)}| [1-\frac{1}{2}] \\
  &= |T_{(4,4,0)}| [1-2] + |T_{(2,2,1)}| [-1] + |T_{(0,0,3)}|[-1 - \frac{e(T_{(0,0,3)})}{2}] + |T_{(0,0,1)}| + \frac{|T_{(1,1,0)}|}{2} 
  &= -|T_{(4,4,0)}| - |T_{(2,2,1)}| + |T_{(0,0,1)}| + \frac{|T_{(1,1,0)}|}{2} + |T_{(0,0,3)}|[-1 - \frac{e(T_{(0,0,3)})}{2}] \\
   &= \frac{|T_{(1,1,0)}|}{2} - |T_{(4,4,0)}| + |T_{(0,0,1)}| - |T_{(2,2,1)}| + |T_{(0,0,3)}|[-1-0]  \\
  &= \frac{|T_{(1,1,0)}|}{2} - |T_{(4,4,0)}| + |T_{(0,0,1)}| - |T_{(2,2,1)}| - |T_{(0,0,3)}|  
\end{align*}

Now, observe that every instance of the singularity \bm{$T_{(4,4,0)}$} increases the bridge index by one. We can see this by analysing the 3-D embedding of \bm{$T_{(4,4,0)}$} from Figure~\ref{alpha}. Notice that the saddle surface has 4 knot strings emanating from the level of the saddle. They extend in both directions, up and down, conceivably knotting in both directions. But, since we are working with a plat presentation of the unknot, these 4 strings will ultimately result in two bridges at the top and the bottom. Now, notice that each further instance of \bm{$T_{(4,4,0)}$} will add two more strings i.e. increase the bridge index of the diagram by 1. Thus we have: 

$|T_{(4,4,0)}| = n-1$ 

This gives us: $\frac{|T_{(1,1,0)}|}{2} - |T_{(4,4,0)}| = 1 $, where $\frac{|T_{(1,1,0)}|}{2}$ is the bridge index of the diagram, as explained in Section 3.

Therefore, we get: \\
$1 = 1 + |T_{(0,0,1)}| - |T_{(2,2,1)}| - |T_{(0,0,3)}| $ \\
$\Lra \mathbf{|T_{(0,0,1)}| = |T_{(2,2,1)}| + |T_{(0,0,3)}|}$
\end{proof}

We are now ready to prove Theorem 1:

\begin{proof}

%We first argue that every time a circle intersescts itself to give two circles, we can reduce that to a single circle by surguring off one of them, without disturbing the knot, thus leaving the diagram unchanged. This will enable us to get rid of any instances of \bm{$T_{(0,0,3)}$} from the foliation of $\mbf{D}$ without altering the knot diagram. %not sure if we are claiming this anymore. \\
%Now, we have two cases of  a circle intersecting itself, as shown in Figure~\ref{deltaandtheta}. In the first case (left 3-D embedding), since we only have essential circles, we empty out one of them into the other one and surger the new inessential circle. As explained in Section 3, since the second case of a circle intersecting itself (right 3-D embedding) can only result in the inner circle being inessential, we do not have to worry about that. \\

Now, Lemma~\ref{bla} tells us that getting rid of one instance \bm{$T_{(0,0,1)}$} is equivalent to getting rid of either one instance \bm{$T_{(2,2,1)}$} or \bm{$T_{(0,0,3)}$}, thus making sense for us to define our complexity function the way we did at the end of \S \ref{graphcomplexity}. \\
Therefore, to reduce the unknot diagram to its standard presentation, we need only remove all instances of the singularities \bm{$T_{(4,4,0)}$} and \bm{$T_{(0,0,1)}$} and the crossings which result from braid isotopies and double coset moves. \\
We do that in the following manner: \\
We have, $c(D_i) = (|T_{(4,4,0)}|, |T_{(0,0,1)}|)$, with the dictionary order, is the complexity function associated with $D_i$, where $|T_{(4,4,0)}|$, $|T_{(0,0,1)}|$ are the number of occurrences of the singularities $T_{(4,4,0)}$ and $T_{(0,0,1)}$ on $D_i$. Further, $C_{t}^{i}$ is the set of level curves obtained from $D_i$ at level $t$. Now, our first step is to resolve any possible crossings using braid isotopies i.e. algebraic cancellations in the braid word corresponding to $K_i$. Then, we move on to our second step where we start removing singularities from the foliation of $D_i$. We will use the directed graph defined in the previous section to help keep track of the process of removing singularities from the disc foliation. \\

\begin{lemma}
For a given $K_i$ in the sequence of theorem 1, in the directed graph $G_{D_i}$, if there is a vertex $V$ which satisfies one of the following conditions: 

a) $V$ represents a min tile of type \bm{$T_{(0,0,1)}$} connected to a down saddle $N$ with neighbors $N_1$ and $N_2$ such that any level curve lying inside $N_1$, $N_2$ does not lie inside the simple closed curve corresponding to $V$ 

b) $V$ represents a max tile of type \bm{$T_{(0,0,1)}$} connected to an up saddle $U$ with neighbors $U_1$ and $U_2$ such that the level curves corresponding to $U_1$, $U_2$ do not lie inside the simple closed curve corresponding to $V$ 

c) $V$ represents a tile of type \bm{$T_{(4,4,0)}$} which is connected to at least one min tile of type \bm{$T_{(1,1,0)}$}, say $M_1$, and at least one max tile of type \bm{$T_{(1,1,0)}$}, say $M_2$, such that when traversing $K_i$, if we write the sequence of tiles we hit, we get $M_1 V M_2$ or $M_2 V M_1$ as a subsequence 

then, we can remove the vertex $V$ using the flip or the pocket move in cases a) and b) and a generalised destabilization in case c), so that we get to $K_{i+1}$ in the sequence given in theorem 1, where $c(D_{i+1}) < c(D_i)$.

\label{lemma5}

\end{lemma}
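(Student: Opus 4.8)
The plan is to treat the three cases separately, in each one translating the stated graph-theoretic condition on the vertex $V$ into an explicit picture of the level curves of $D_i$ near the relevant singularities, recognizing that picture as the foliated signature of one of the moves of Theorem~\ref{thm1}, and then applying the inverse move to excise $V$. Throughout I would use that the order on $c(D_i)=(|T_{(4,4,0)}|,|T_{(0,0,1)}|)$ is lexicographic, so that decreasing the first coordinate always wins, while to decrease the second coordinate I must remove a $T_{(0,0,1)}$ without creating any new $T_{(4,4,0)}$.

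For cases (a) and (b): I would argue that a circle extremum $V$ of type $T_{(0,0,1)}$ sitting just below its adjacent down saddle $N$ in case (a) (resp.\ just above its adjacent up saddle $U$ in case (b)) is exactly the configuration that a single pocket or flip move introduces on the spanning disc — compare the level-curve rows and the two distinct $3$-D embeddings in Figures~\ref{beta} and \ref{gamma}, where a capping circle together with its adjacent saddle is what one such move produces. Which embedding occurs dictates the move: the pocket-type embedding calls for the inverse pocket move and the flip-type embedding for the inverse flip move. The role of the nesting hypothesis — that no level curve inside $N_1,N_2$ (resp.\ $U_1,U_2$) lies inside the simple closed curve of $V$ — is to guarantee that the sub-disc capped by $V$ is unobstructed by the rest of $D_i$, so the retracting isotopy sweeps out an embedded region and the move is genuinely realizable. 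Carrying out the inverse move cancels $V$ against its adjacent saddle, removing one $T_{(0,0,1)}$ and, by the bookkeeping of Lemma~\ref{bla}, exactly one $T_{(2,2,1)}$ or $T_{(0,0,3)}$, while leaving every $T_{(4,4,0)}$ untouched. Since pocket and flip moves preserve the bridge index, $|T_{(4,4,0)}|=n-1$ is unchanged and $|T_{(0,0,1)}|$ drops by one, so $c(D_{i+1})<c(D_i)$ in the second coordinate.

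For case (c): here $V$ is a $T_{(4,4,0)}$ tile, which by the analysis preceding Lemma~\ref{bla} contributes one unit to the bridge index. The subsequence hypothesis $M_1 V M_2$ (or $M_2 V M_1$) along $\mbf{K_i}$ says that one maxima-strand and one minima-strand leaving the saddle close up immediately into an innermost top bridge $M_2$ and bottom bridge $M_1$ with no intervening critical points, so that the four strands of $V$ together with $M_1$ and $M_2$ form precisely the local picture of a stabilized strand-pair (compare Figure~\ref{stab}). I would then realize the inverse of a stabilization — a generalized destabilization — on this block, deleting the saddle $V$ together with the paired bridges and lowering the bridge index by one. This removes one $T_{(4,4,0)}$, so the first coordinate of $c$ strictly decreases; because the order is lexicographic, $c(D_{i+1})<c(D_i)$ regardless of the effect on $|T_{(0,0,1)}|$.

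In all three cases the move used lies among those permitted in Theorem~\ref{thm1} and is non-index-increasing (pocket and flip preserve, destabilization decreases), so the resulting $K_{i+1}$ is a legitimate next term of the sequence. The main obstacle I anticipate is the realizability step rather than the bookkeeping: one must verify that the nesting and subsequence conditions are exactly strong enough to rule out entanglement of the sub-disc (or the stabilized block) with the remainder of $D_i$, so that the inverse move is an honest ambient isotopy of plats and does not inadvertently spawn new $T_{(4,4,0)}$ singularities or raise the bridge index. Establishing this — essentially that ``innermost/unnested in the foliation'' forces ``geometrically unobstructed'' — is where the care is needed, and I expect it to lean on the tree structure of $G_{D_i}$ from \S\ref{graphcomplexity}, which ensures that the separating curves cut off a genuinely independent region of the disc.
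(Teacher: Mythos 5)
Your treatment of case (c) matches the paper's: the subsequence condition forces $M_1$, $M_2$ and the intervening portion of $V$ to cobound a singularity-free subdisc, i.e.\ a trivial loop, which is removed by (double coset moves and) a destabilization, strictly decreasing the first coordinate of $c$. The lexicographic bookkeeping throughout is also correct.

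For cases (a) and (b), however, there is a genuine gap, and it stems from a misreading of the hypothesis. The nesting condition constrains only the level curves of the two \emph{other} tiles $N_1,N_2$ adjacent to the saddle $N$; it does \emph{not} say that the planar disc bounded by the simple closed curve of $V$ is empty. Indeed it cannot be: by the convention of \S\ref{level curves}, any level circle that contains no arc or curve inside it is surgered off at the outset, so the scc capped by a surviving min tile $V$ of type $T_{(0,0,1)}$ necessarily encircles strands of the plat. Your argument declares the subdisc ``unobstructed'' and reduces the step to cancelling $V$ against its adjacent saddle by ``applying the inverse move,'' which would only be valid in precisely the situation that has already been excluded. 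The actual content of the paper's proof is the construction that \emph{creates} the unobstructed situation: one locates a nearest min tile $V_1$ of type $T_{(1,1,0)}$ (which exists since there are $|T_{(4,4,0)}|+1$ bottom bridges), builds a path $\alpha$ from the saddle $S_{V_1}$ to $V_1$ transverse to the foliation, thickens it to a $3$-ball $N(\alpha)$, and drags the bridges corresponding to the arcs nested inside $V$'s scc through $N(\alpha)$ --- this isotopy \emph{is} the pocket move. Only after the scc has been emptied in this way can the tile of $V$ be surgered off, cancelling the min with the down saddle. Your proposal correctly flags realizability as ``where the care is needed'' but offers no mechanism for it; the path-and-$3$-ball construction is that mechanism, and without it the proof of cases (a) and (b) is missing its central step.
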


\begin{proof}

%{\color{red} \bf label picture}

\begin{figure}[h!]
\labellist
\small\hair 2pt
\pinlabel {$\alpha$}  at 193 90
\pinlabel {$V$} at 121 46
\pinlabel {$V_1$} at 230 46
\pinlabel {$\mbf{B}$} at 110 236
\pinlabel {$S_{V_1}$} at 177 170
\pinlabel {cross section of $N(\alpha)$} at 400 129
\pinlabel {$N(\alpha)$} at 265 180
\endlabellist
\centering
\includegraphics[width=11cm, height=8cm]{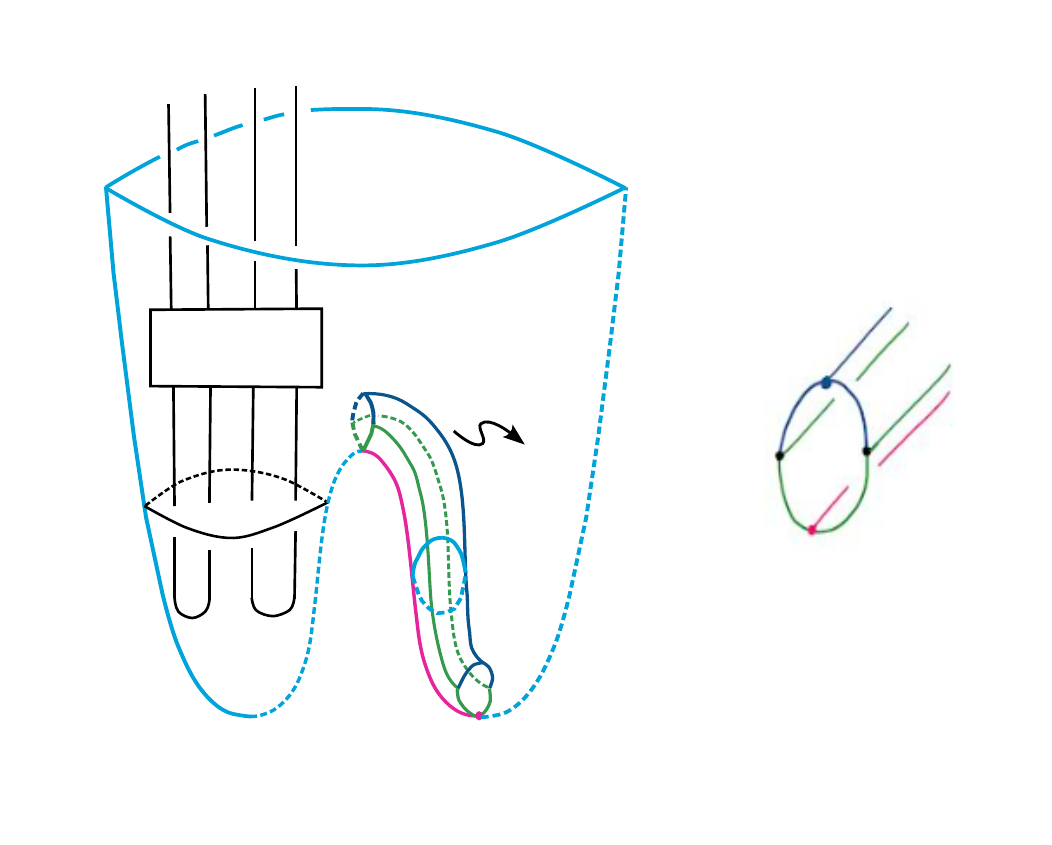}
\caption{Path isotopy to remove saddles}
\label{path_isotopy}
\end{figure}

Consider a vertex $V$ satisfying condition a). Then, we look for a closest min tile to $V$, in the graph $G_{D_i}$, say $V_1$. Note that such a tile always exists because we have $|T_{(4,4,0)}| + 1$ many min tiles of type $T_{(1,1,0)}$, corresponding to the bottom bridges. Consider the saddle $S_{V_1}$ that the singularity $V_1$ is induced by. Now, define a path $\alpha$ (the pink curve in Figure \ref{path_isotopy}), from $S_{V_1}$ to $V_1$, on the disc, such that $\alpha$ is always transverse to the foliation, other than at points $\alpha(0)$, $\alpha(1)$. Consider a thickened up regular neighborhood of $\alpha$, say $N(\alpha)$ which stays completely on one side of the disc, as shown in Figure \ref{path_isotopy}. Notice that $N(\alpha)$ is a 3-ball, positioned so that the boundary sphere is split up into the following four components: two copies of $\alpha \times I$ (the green part in the figure which lives on the surface, and the blue part which is completely on one side of the surface) glued along their respective $\alpha \times \{0\}$ and $\alpha \times \{1\}$ components. This gives us an annulus. Capping off this annulus with two discs gives us the two sphere which is the boundary of $N(\alpha)$. \\
For any singularity $S$ on $D_i$, let $t_{S}$ denote the level at which the singularity occurs. Now, we inspect $C^{i}_{t}$, for some $t \in (t_N, t_N+\epsilon)$. The arcs inside the curve corresponding to $V$, we remove via the following isotopy: we move the bridges corresponding to these arcs along the path $\alpha$, always staying in side the $3$-ball $N(\alpha)$, starting from the point $V$ on the boundary of $N(\alpha)$ and ending at the point $V_1$, also on the boundary of $N(\alpha)$. Notice that this corresponds to doing a { \em pocket move}. Once we have emptied out the simple closed curve corresponding to $V$, we can surger off the subdisc which is the tile represented by $V$, thus simultaneously cancelling the saddle $N$, while removing the vertex $V$. \\
We can do a similar isotopy for a vertex $V$ satisfying condition b), with the only difference being that we now need to look for the next closest max tile to $V$. \\
For the case of condition c), the tiles $M_1$, $M_2$, alongwith the part of tile $V$ sandwiched between them bound a subdisc devoid of any singularities. Since the subdisc does not have any singularities, it depicts a trivial loop, which can be removed via a standard destabilization after doing some double coset moves which align the top and bottom bridge at the same position. This gets rid of the tiles $M_1$, $M_2$ and $V$. \\
In all the three cases above, we get a new plat $K_{i+1}$ such that $C(D_{i+1}) < c(D_i)$.
\end{proof}

\begin{lemma}
A vertex satisfying condition a) or b) in Lemma \ref{lemma5} always exists, if there are any tiles of type \bm{$T_{(0,0,1)}$} in the foliation of the disc embedding $D_i$. If there is a tile of type \bm{$T_{(4,4,0)}$} in the foliation, but no tile of type \bm{$T_{(0,0,1)}$}, then there exists a vertex satisfying condition c) in Lemma \ref{lemma5}.

\label{lemma6}

\end{lemma}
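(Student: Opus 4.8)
The plan is to prove the two assertions separately, using throughout the fact (from \S\ref{graphcomplexity}) that $G_{D_i}$ is a tree together with the valence count: since a tile $T_{(a,b,c)}$ has valence $b+c$, the leaves of $G_{D_i}$ are exactly the vertices of type $T_{(0,0,1)}$ and $T_{(1,1,0)}$, while $T_{(2,2,1)}$ and $T_{(0,0,3)}$ have valence $3$ and $T_{(4,4,0)}$ has valence $4$. The first assertion (a vertex satisfying a) or b) when $|T_{(0,0,1)}|\neq 0$) I would establish by an innermost-circle argument; the second (a vertex satisfying c) when $|T_{(0,0,1)}|=0$ but $|T_{(4,4,0)}|\neq 0$) by a counting argument along the knot, after first invoking Lemma~\ref{bla} to record that $|T_{(0,0,1)}|=0$ forces $|T_{(2,2,1)}|=|T_{(0,0,3)}|=0$, so that only tiles of type $T_{(4,4,0)}$ and $T_{(1,1,0)}$ survive.

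For the first assertion, I would begin from the observation that a nonzero $|T_{(0,0,1)}|$ produces at least one interior extremum and hence at least one simple closed leaf, so the foliation contains an \emph{innermost} simple closed curve $c$, one bounding a disc region in its level plane $P_t$ that meets no other leaf. Such a region is a leaf-free subdisc of $D_i$, so a Morse/Euler-characteristic count forces it to cap off a single extremum, namely a tile $V$ of type $T_{(0,0,1)}$; passing to an innermost such $V$ (the descent terminates since there are finitely many extrema), I may assume $c=c_V$ encloses no leaf at every level between the critical level of $V$ and the level of its adjacent saddle $N$. Assume $V$ is a minimum (the maximum case is identical after reflecting the height function and exchanging conditions a) and b)). The key local claim is that, because the disc bounded by $c_V$ contains no leaf, $c_V$ cannot be pinched from inside, so as $t$ increases it can only merge with a curve lying outside it; hence $N$ is a \emph{down} saddle, and its remaining neighbors $N_1,N_2$ (the parent curve above $N$ and the second lower curve) together with everything they enclose lie outside $c_V$. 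This is precisely condition a), and the maximum case yields condition b).

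For the second assertion, Lemma~\ref{bla} leaves only tiles of type $T_{(4,4,0)}$ and $T_{(1,1,0)}$, with $|T_{(4,4,0)}|=n-1$ and $|T_{(1,1,0)}|=2n$. I would traverse $\mbf{K_i}$ cyclically: its $2n$ boundary extrema ($n$ maxima at $t=1$ and $n$ minima at $t=0$) alternate maximum/minimum and cut $\mbf{K_i}$ into $2n$ monotone arcs, each passing through some $T_{(4,4,0)}$ tiles along their knot-edges. Since each $T_{(4,4,0)}$ contributes $4$ knot-edges, the $4(n-1)$ saddle knot-edges are distributed among the $2n$ arcs, for an average of $2-2/n<2$ per arc; hence some monotone arc meets at most one $T_{(4,4,0)}$ tile. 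When it meets exactly one such tile $V$, the traversal reads $M_1\,V\,M_2$ with $M_1$ the bounding minimum-bridge below $V$ and $M_2$ the bounding maximum-bridge above $V$, both incident to $V$, and the subdisc between them is singularity-free; this is condition c).

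The hard part, I expect, is the local claim in the first assertion: rigorously ruling out that the innermost curve $c_V$ \emph{splits} (an up saddle of type $T_{(0,0,3)}$ with one circle below and two above) rather than merges, and reading off the non-nesting clause of conditions a)/b) directly from the emptiness of the disc bounded by $c_V$. A secondary point to dispatch is the degenerate branch of the counting argument, where a monotone arc meets \emph{no} $T_{(4,4,0)}$ tile, so that a bottom bridge is joined to a top bridge by a leaf-free monotone arc; here I would argue that once $|T_{(4,4,0)}|\neq 0$ this either cannot occur or already exhibits a trivially destabilizable strand, so that the exactly-one case is the operative one realizing condition c).
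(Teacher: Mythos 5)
Your argument for the second assertion is sound and genuinely different from the paper's: the paper asserts that when only $T_{(4,4,0)}$ and $T_{(1,1,0)}$ tiles survive, every $T_{(4,4,0)}$ tile is attached to four bridge tiles, whereas your averaging of the $4(n-1)$ saddle knot-edges over the $2n$ monotone boundary arcs only needs \emph{some} arc to meet exactly one saddle tile, and that is enough for condition c). The degenerate branch you worry about (an arc meeting no $T_{(4,4,0)}$ knot-edge) is easily dispatched: such an arc would force a min bridge tile and a max bridge tile to be glued along their unique interior edges, so the two of them already exhaust the disc and $|T_{(4,4,0)}|=0$, contrary to hypothesis; hence every arc meets at least one, and the average being strictly less than $2$ forces some arc to meet exactly one.

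The first assertion, however, has a genuine gap exactly at the step you flag, and the claim you hope to prove there is false as stated. The emptiness of the disc bounded by $c_V$ in its level plane does \emph{not} prevent $c_V$ from pinching as $t$ increases: a pinch is a self-tangency of $c_V$, not an interaction with an interior leaf, so no interior leaf is needed to produce one. Geometrically this is the configuration of an inverted pair of pants whose waist is capped off by the minimum $V$ --- a perfectly realizable local piece of a spanning disc --- and in that case the saddle adjacent to $V$ is an up saddle, so $V$ itself satisfies neither a) nor b). Innermost-ness does not terminate the problem either, because the two lobes produced by the pinch are again empty circles and may pinch again. To close the gap you must follow the chain upward: either argue (as the paper does) by contradiction that if no min tile meets a down saddle and no max tile meets an up saddle then each saddle forces two further saddles, producing infinitely many tiles; or iterate your innermost argument, observing that the lobes created by a pinch remain empty, that the process must terminate at finitely many singularities, and that it can only terminate at an extremum adjacent to a saddle of the correct type (e.g.\ an empty lobe capping off at a maximum is a max tile adjacent to that up saddle, and the other two neighbors of that saddle lie outside the lobe, giving condition b)). You should also account for the saddle being of type $T_{(2,2,1)}$ (a circle merging with an \emph{arc}) rather than $T_{(0,0,3)}$; your argument speaks only of circles merging with circles.
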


\begin{proof}

%{\color{red} \bf label picture}

\begin{figure}[h!]
\labellist
\hair 6pt
\pinlabel {connecting scc}  at 158 156
\pinlabel {up saddle} at 453 175
\pinlabel {down saddle} at 55 225
\pinlabel {$s_1$} at 205 206
\pinlabel {$s_2$} at 311 170
\pinlabel {$b$} at 155 178
\pinlabel {$c$} at 292 187
\pinlabel {$d$} at 385 200
\endlabellist
\centering
\includegraphics[width=11cm, height=8cm]{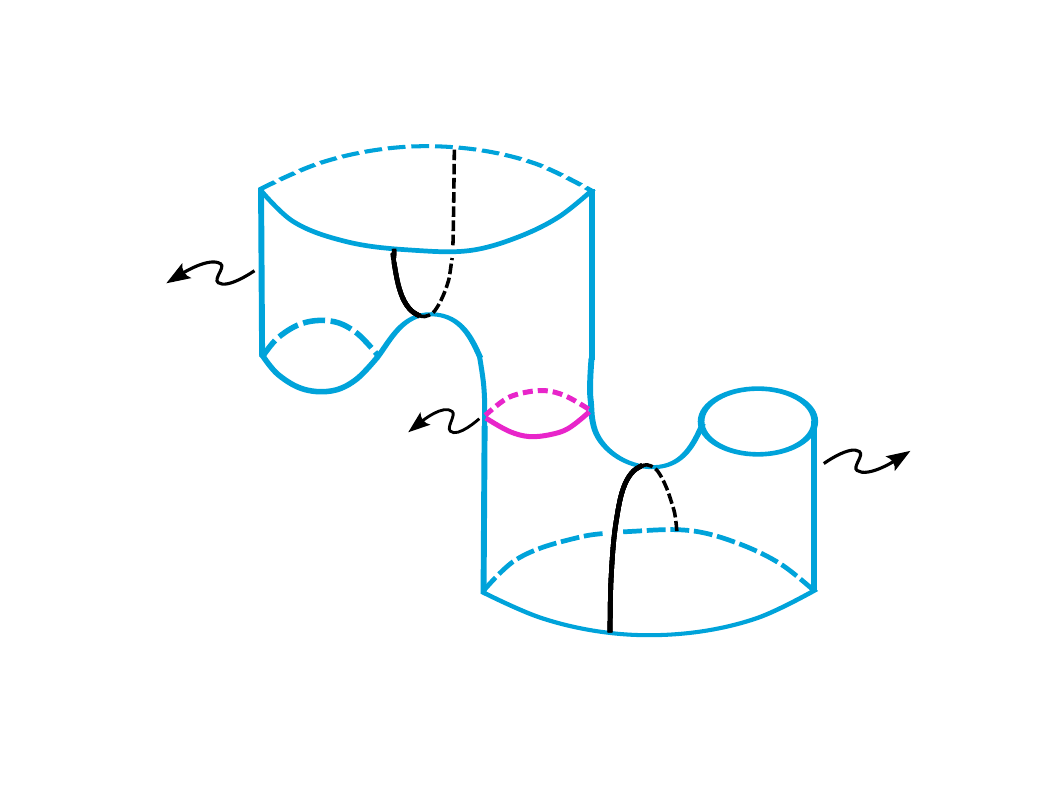}
\caption{A down saddle connected to an up saddle via a simple closed curve}
\label{updownsaddle}
\end{figure}

We argue this by contradiction. If there were no such vertices as described in the previous lemma, then no down saddle is connected to a min tile and no up saddle is connected to a max tile. \\
Then, near any down saddle $s_1$, the disc locally looks like as depicted in Figure~\ref{updownsaddle}, with the variation that $s_1$ could also have been connected to another down saddle, as opposed to the up saddle shown in the figure. Now, since the saddle $s_1$ is not connected to a min tile, neither of the two curves $b$, $c$ are capped off by discs. Therefore, $s_1$ is connected to two more saddles, say $s_2$ and $s_3$. Note that $s_2$ and $s_3$ cannot be connected to each other because that would mean the surface has genus, which is a contradiction. Then, if both $s_2$ and $s_3$ are down saddles, by the same argument, as above, we get two more down saddles each for $s_2$ and $s_3$, because they are not connected to any min tiles either. If this process continues indefinitely, we have infinitely many tiles, which can not happen. \\
Therefore, without loss of generality, let $s_2$ be an up saddle. Now the local picture is exactly as shown in Figure~\ref{updownsaddle}. Further, since no up saddle is connected to a max tile, $d$ cannot be capped off by a disc, therefore $d$ is further connected to another saddle and, as before, this process continues indefinitely unless there is an up saddle connected to a max tile or down saddle connected to a min tile. \\
If we locate a min tile $M$ connected to a down saddle $s_1$, such that the scc corresponding to $M$ contains an arc coming from another tile connected to $s_1$, then we use the flip or the microflip move to remove that arc (this might potentially get rid of multiple instances of $T_{(0,0,1)}$. Doing this, we now have a vertex satisfying condition a) of the previous lemma. The argument is exactly the same for a max tile. \\
Now, let $|T_{(0,0,1)}| = 0$ and $|T_{(4,4,0)}| > 0$. Then, by Lemma ~\ref{bla}, $|T_{(0,0,3)}| = 0$ and $|T_{(2,2,1)}| = 0 $. Therefore, any tile of type $T_{(4,4,0)}$ has to be connected to 4 tiles of type $T_{(0,0,1)}$, two of which will be min tiles and two max tiles (refer the bottom right picture in Figure ~\ref{alpha}). Therefore, this $T_{(4,4,0)}$ tile corresponds to a vertex $V$ in the graph satisfying condition c) of Lemma ~\ref{lemma5}.
\end{proof}

In light of Lemmas \ref{lemma5} and \ref{lemma6}, we now have a blueprint which tells us how to create the sequence in Theorem \ref{thm1}, monotonically decreasing the complexity function on the $K_i$'s. We terminate the sequence when we reach a $K_l$ which does not have any singularities on the disc it bounds, other than a top bridge tile and a bottom bridge tile. Then, any remaining crossings come from twisting the bridges or doing Reidemeister II moves with consecutive strands coming from the same bridge, which can be undone using the double coset moves or braid isotopies, leading us to the standard presentation of the unknot. This completes the proof of Theorem \ref{thm1}.
\end{proof}

We now prove Corollary \ref{case_unlink}.

\begin{proof}

Since $\mbf{L}$ is a presentation of $\mbf{U_k}$, it bounds $k$ pairwise disjoint discs. As described previously, each of these discs is foliated by level curves. \\
Then, we start by analysing the foliation on each of these discs. \\
Starting with the disc corresponding to the first bridge, say, $D_1$, we remove all singularities from it, by the algorithm described above. we are able to do this since the collection of discs is disjoint. \\
We now analyse the system of level curves corresponding to the discs. If, at each level, barring $0$ and $1$, the system of level curves is standard i.e. $k$ straight line segments, such that none is in front of any other, there are no other crossings and the diagram is the standard $k$-bridge, 0-crossing diagram of the unlink. \\
 If not, then we get rid of the remaining crossings using braid isotopies and double coset moves. We are able to do so since none of the discs have any singularities of type $T_{(4,4,0)}$, $T_{(2,2,1)}$ or $T_{(0,0,1)}$. \\ 
 We repeat this process till we reach the standard diagram, which finishes our proof.
\end{proof}

\section{\centering Situation in $S^3$} \label{S^3}

We think of $S^3$ as $\mathbf{S}(S^2) = S^2 \times [0,1]/ S^2 \times \{0\} , S^2 \times \{1\} $, the double suspension of $S^2$. Then the height function for a plat presentation will have the $S^2 \times \{ t \} , \ 0 < t < 1 $ as the level surfaces.  With this structure one observes that the flip move is a braid isotopy in $S^3$. Specifically, in Figure ~\ref{stab_seq} the pink strand is always transverse to the level spheres.  Thus, we reinterpret the stabilisation sequence in Figure ~\ref{stab_seq} as a sequence of braid isotopies for $B_n(S^2)$. This is explained very lucidly and in detail in Chapter 10 of \cite{murasugi_kurpita}. This gives us the following Corollary: \\
\begin{corollary} 

\label{one_dblcosetcls}

In $B_n(S^2)$, there is only one double coset class corresponding to the unknot, the obvious representative of which is the $n$-stabilised trivial plat, $U_n$.

\end{corollary}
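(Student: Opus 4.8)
The plan is to feed an arbitrary $2n$-plat presentation $\mbf{K}$ of the unknot into the machinery of Theorem \ref{thm1} and then \emph{reread every move of the resulting simplification in $S^3$}. Concretely, Theorem \ref{thm1} produces a finite sequence $\mbf{K} = \mbf{K_0} \lra \cdots \lra \mbf{K_m} = \mbf{U}$ in which each arrow is a braid isotopy, a destabilization, a pocket move, or a flip move, with non-increasing bridge index. I would process this sequence move-by-move, tracking where each move lives once we pass from $\mbb{R}^3$ to the double-suspension structure on $S^3$ (where the level surfaces are the spheres $S^2 \times \{t\}$, and the relevant group is the spherical braid group $B_{2n}(S^2)$ underlying $n$-bridge plats).

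The key reinterpretation is the one set up immediately before the statement: in $S^3$ the pink strand of Figure \ref{stab_seq} is transverse to the level spheres, so a flip move \emph{is} a braid isotopy of $B_{2n}(S^2)$ and no longer requires the stabilization sequence it needed in $\mbb{R}^3$. Combining this with Lemma \ref{pmisdcm}, which expresses a pocket move as a composition of braid isotopies and double coset moves, every arrow of the Theorem \ref{thm1} sequence becomes, in $S^3$, either a braid isotopy, a double coset move, or a destabilization. In particular the simplification in $S^3$ uses \textbf{no stabilizations}: this is exactly the payoff of the flip being free on the sphere.

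Next I would fix the bridge index. Braid isotopies and double coset moves preserve both the number of strands and the two-sided coset of $\mathcal{B}_{2n}$ modulo $\mathcal{K}_{2n}$ (now read in $B_{2n}(S^2)$), so the only moves in the reduced sequence that change the double coset structure are the destabilizations, of which there are exactly $n-1$, carrying index $n$ down to index $1$. I would show these can be \emph{declined}: each destabilization in Theorem \ref{thm1} is applied, via condition (c) of Lemma \ref{lemma5}, to a $T_{(4,4,0)}$ tile flanked by a cancelling pair of extrema, i.e. to a trivial bridge pair. Leaving such a pair in place rather than removing it keeps us in $B_{2n}(S^2)$, while the remaining pocket and flip moves --- now braid isotopies and double coset moves --- still eliminate all other singularities. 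Running the whole algorithm this way carries $\mbf{K}$, by braid isotopies and double coset moves \emph{alone}, to the trivial plat $\mbf{U}$ with its $n-1$ undischarged trivial bridge pairs standardized in place, which is precisely $U_n$. Hence $\mbf{K}$ lies in the double coset of $U_n$; as $\mbf{K}$ was arbitrary, there is only one double coset class of the unknot in $B_{2n}(S^2)$.

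I expect the main obstacle to be this last step: justifying that every destabilization of Theorem \ref{thm1} may be deferred, and that the trivial bridge pairs left behind can be brought to the standard form of $U_n$ using only index-preserving moves, without the parked bridges obstructing the subsequent pocket and flip reductions. This requires checking that the support of each remaining move can be kept disjoint from --- or slid harmlessly past, using the spherical double coset moves --- the deferred trivial bridges, so that the fixed-index reading of the reduction genuinely stays within a single double coset.
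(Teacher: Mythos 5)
Your reinterpretation of the Theorem \ref{thm1} sequence in the double-suspension picture is exactly the paper's opening step: the flip becomes a braid isotopy of the spherical braid group because the flipped strand stays transverse to the level spheres, and Lemma \ref{pmisdcm} converts each pocket move into braid isotopies plus double coset moves, so the whole reduction becomes braid isotopies, double coset moves, and destabilizations. The divergence, and the gap, is in how you dispose of the destabilizations. You propose to \emph{decline} each one --- parking the trivial bridge pair and continuing the reduction at fixed index --- but you do not actually establish that this is possible; you flag it yourself as ``the main obstacle.'' That step is the entire content of the corollary: until you show that the interleaved destabilizations can be pushed to the end (equivalently, that every subsequent pocket/flip move lifts past the parked bridges, or that the parked bridges can be slid out of the way by spherical double coset moves without ever being consumed), you only know that $\mbf{K}$ reaches the trivial plat through a \emph{mixture} of double coset moves and destabilizations, which does not place $\mbf{K}$ and $U_n$ in the same double coset of the fixed-index group.

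The paper closes exactly this gap by citation rather than by a new geometric argument: Lemma 11 of Birman's 1976 paper \cite{birman_1976} says that in such a sequence all double coset moves may be performed before all destabilizations. Granting that, the reduction first carries $K_1$ to $U_n$ by braid isotopies and double coset moves alone, and only then destabilizes, which is the statement of the corollary. Your ``deferral'' strategy is morally a geometric re-proof of that commutation lemma, and it is plausible that it can be carried out (the parked bridges are trivial and split from the rest of the diagram), but as written the proposal asserts the conclusion of the commutation step without proving it. Either import Birman's Lemma 11 as the paper does, or supply the disjointness/sliding argument for the parked bridges in full; without one of the two, the proof is incomplete.
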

\begin{proof}

In light of the fact that the flip move is a braid isotopy in $B_n(S^2)$, and pocket moves are a combination of double coset moves and braid isotopies, Theorem \ref{mainresult} tells us that we can go from any $n$-bridge plat representative of the unknot to the trivial plat using just braid isotopies, double coset moves and destabilisations. \\
Now, if $K_1$ is an $n$-bridge plat representative of the unknot, then we can go from $K_1$ to the trivial plat using double coset moves and destabilizations (and of course, braid isotopies, but they do not change the braid corresponding to a plat). Then, from Lemma 11 in \cite{birman_1976}, we can perform all the double coset moves before all the destabilizations, which means that we can always first reduce $K_1$ to $U_n$, the $n$-stabilised trivial plat, before destabilising $n$ times.
\end{proof}

We would like to point out here that Jean Pierre Otal shows in \cite{otal} that in $S^3$, any $n$- bridge representative of the unknot is bridge isotopic to $U_n$, and Corollary \ref{one_dblcosetcls} implies his result.

\section*{The Goeritz unknot}  \label{goeritz}

%When trying to reduce a knot diagram to its minimal bridge representation, an interesting question is whether it is enough to choose \textbf{t} to be the bridge index of the diagram, and it turns out that the answer is no. Further, this phenomenon is all pervasive since there exist 2-bridge diagrams of the unknot where they can not be monotonically simplified to the 0-crossing, 1-bridge diagram of the unknot. Thus, connect summing that diagram to \textbf{any} knot diagram will result in necessitating stabilization (this is described in the next section) to simplify the diagram to its lowest bridge representative. Further, we can construct examples of the unknot where \textbf{t} is arbitrarily large i.e. we can construct diagrams of the unknot which need to be stabilised as many times as we want, before they can be simplified. \\

\begin{figure}[h]

\centering

\includegraphics[width=6cm, height=6cm]{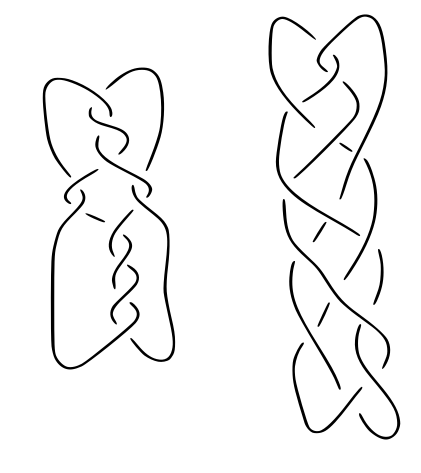}
\caption{The Goeritz unknot and a variation}
\label{goeritzvmine}

\end{figure}

\begin{figure}[ht!]
%\centering
%\labellist
%\small\hair 2pt
%\pinlabel {Flip} at 130 406
%\pinlabel {$R II$} at 250 406
%\pinlabel {$R II$} at 363 406
%\pinlabel {$R II$} at 463 406
%\pinlabel {Destabilization} at 326 170
%\endlabellist
\centering
\includegraphics[width=17cm, height=12.5cm]{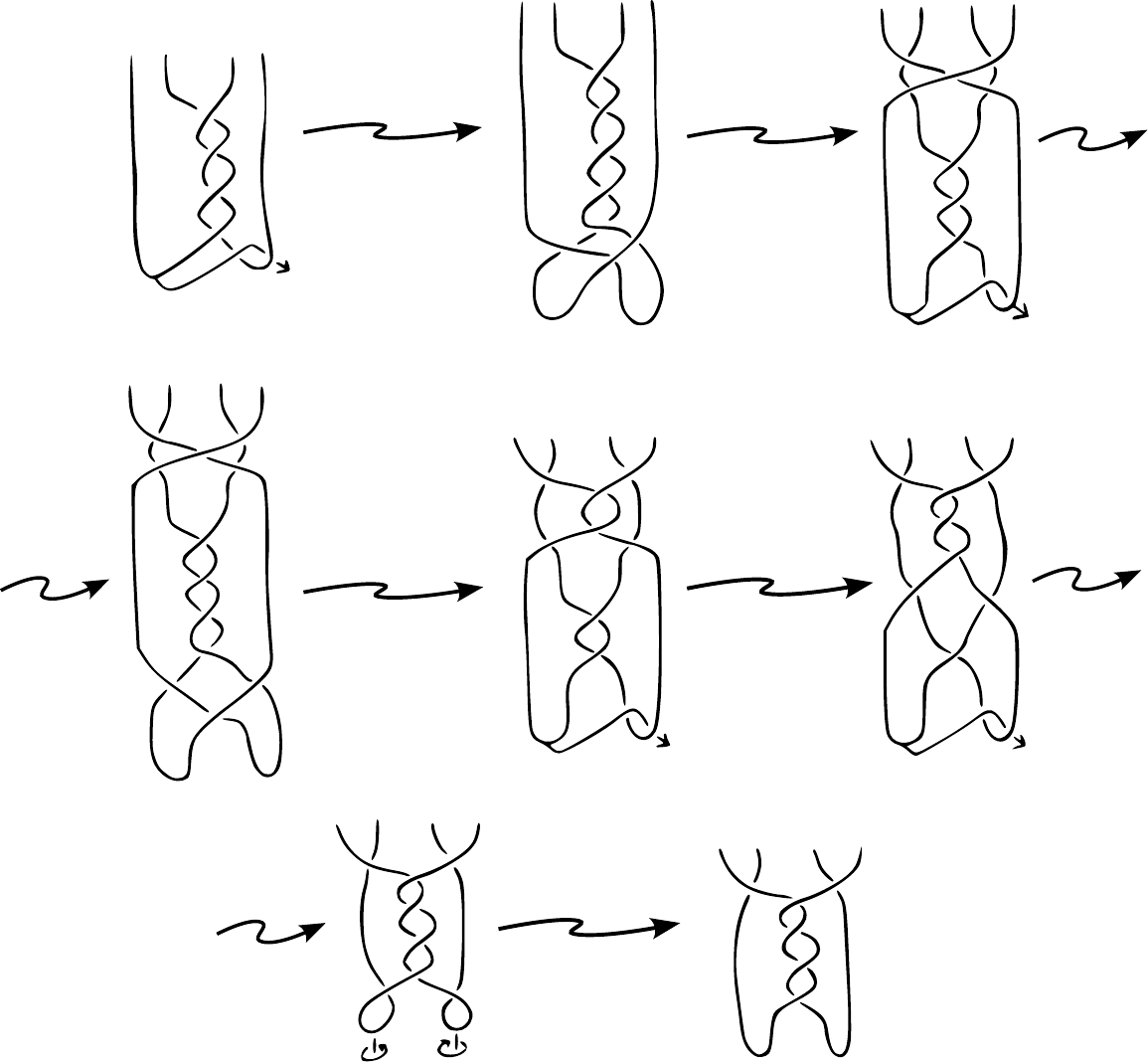}
\caption{Isotopy Sequence A}
\label{iso_seq_A}
\end{figure}

\begin{figure}[ht!]
\centering
\labellist
\pinlabel {I} at 111 234
\pinlabel {II} at 296 234
\pinlabel {III} at 476 234
\pinlabel {IV} at 235 29
\pinlabel {V} at 409 29
\pinlabel {Isotopy Seq. A} at 373 376
\pinlabel {$R$ $II$} at 190 378
\pinlabel {$R$ $II$} at 536 378
\pinlabel {Destabilization} at 322 160
\endlabellist
\centering
\includegraphics[width=15cm, height=10cm]{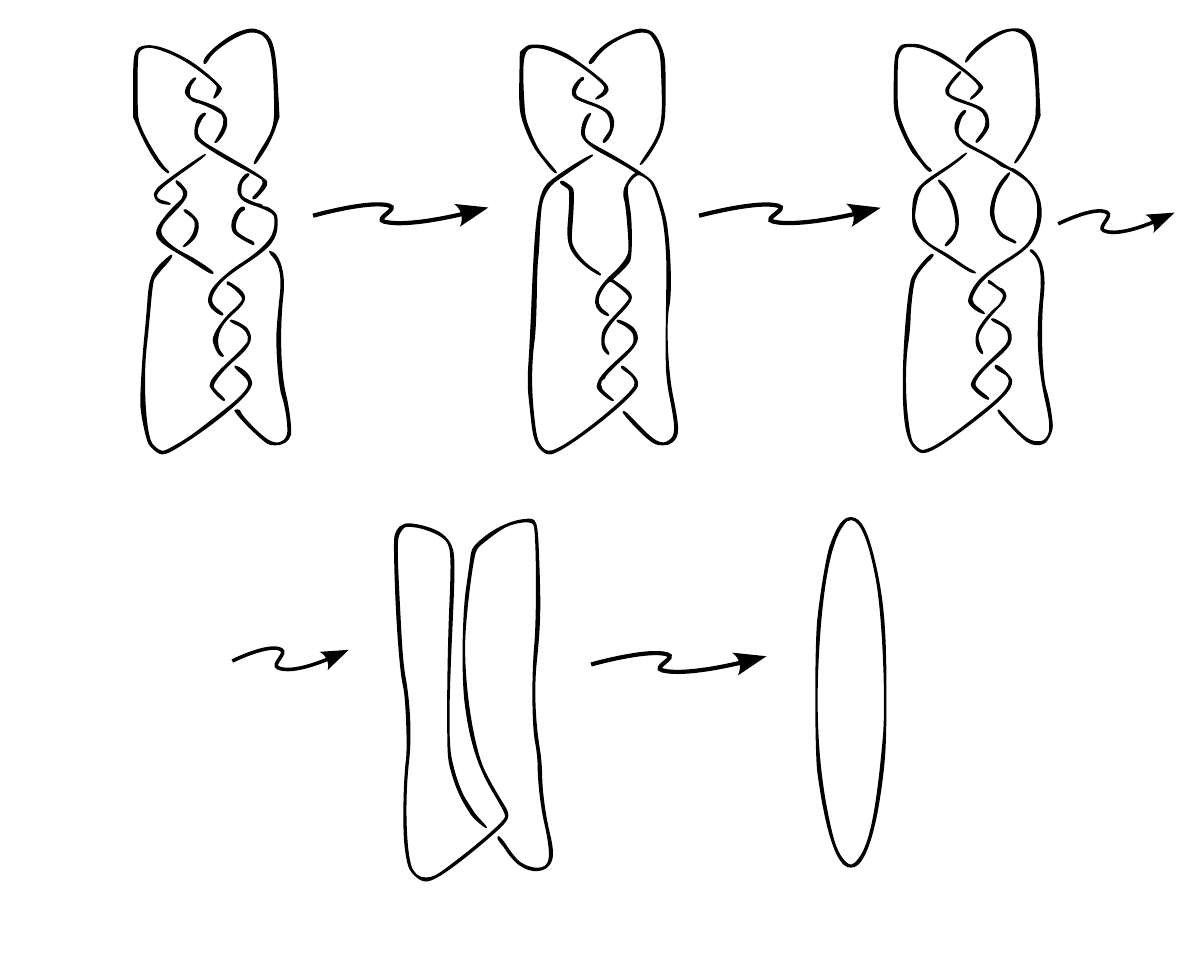}
\caption{Unknotting Goeritz unknot using double coset moves}
\label{unknottinggoeritz}
\end{figure}

\begin{figure}[ht!]
\centering
\labellist
\small\hair 2pt
\pinlabel {Flip} at 130 406
\pinlabel {$R$ $II$} at 250 406  
\pinlabel {$R$ $II$} at 363 406
\pinlabel {$R$ $II$} at 463 406
\pinlabel {Destabilization} at 326 170
\endlabellist
\centering
\includegraphics[width=15cm, height=12.5cm]{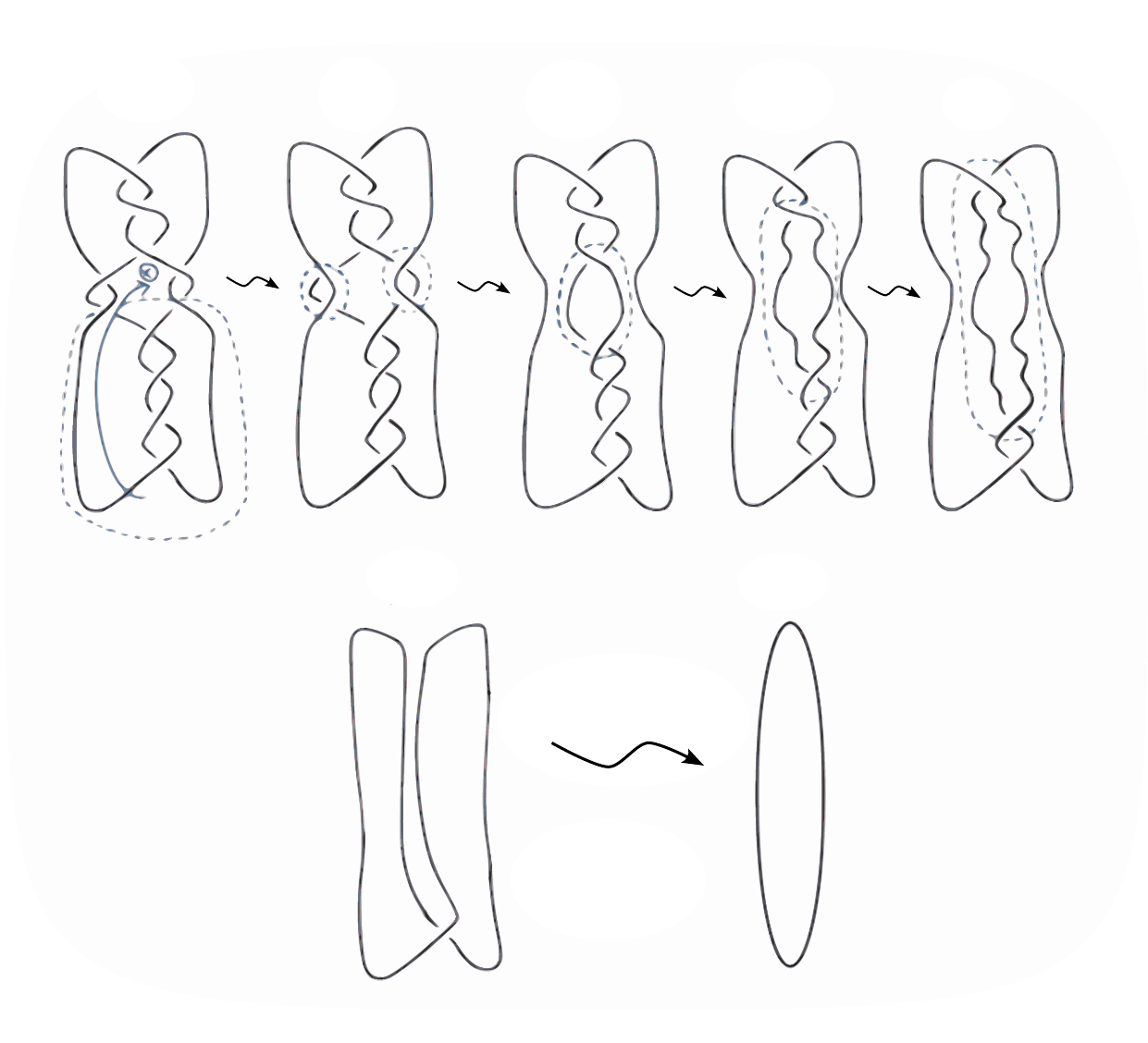}
\caption{Unknotting the Goeritz unknot using the flip move}
\label{goeritzunknotting}
\end{figure}

Figure ~\ref{goeritzvmine} shows two 2-bridge diagrams of the unknot which (prior to the discovery of the pocket and flip moves) require stabilization (in the geometric sense) in order to be simplified to the 0-crossing unknot. The one on the left is due to Goeritz (first ever appearance in \cite{goeritz_34}), the second one due to the author. The knot software Regina \cite{regina} can be readily used to check the fact that geometric stabilization (i.e. adding a crossing) is necessary. This can be done by restricting the set of moves to a subset of the Reidemeister graph (\cite{barbensi_2020}) where the number of crossings does not exceed that of the initial diagram. 

We show how to untangle the Goertiz unknot both with and without the flip move. First consider Isotopy sequence A, applied to the bottom part of the Goeritz unknot, shown in Figure ~\ref{iso_seq_A}. Applying this sequence to the Goeritz unknot, we get the first plat in Figure ~\ref{unknottinggoeritz}, which is then simplified to the trivial diagram as shown in the figure. Contrast this with Figure~\ref{goeritzunknotting}, which shows untangling of the Goeritz unknot using the flip move, without ever exceeding the number of crossings in the initial diagram. Notice that untangling of the Georitz using the flip move is much faster than without it. 

Untangling the second unknot in Figure ~\ref{goeritzvmine}, {\em without stabilization}, utilises both the flip move and certain double coset moves (in addition to braid isotopies and destabilization).

%{\color{red} \bf Mention Dynnikov's work here, what about Lackenby? He seems to have used exchange moves too.}\\
%The discovery of the `exchange move' has had some important ramifications to the study of knots. Ivan Dynnikov used exchange moves to come up with algorithms to detect the unknot, recognise split links and factorise composite links (\cite{dynnikov2004arcpresentations}. More recently, Marc Lackenby published an algorithm to detect the unknot with a polynomial upper bound on the number of Reidemeister moves, in \cite{lackenby2014polynomial}. Lackenby's work uses exchange moves. In 2021, Lackenby announced a quasi polynomial time algorithm to detect the unknot. 

%\section{\centering Future work} \label{futurework}

%We are currently working on extending this result to all knot types and using the machinery we have developed to study plat presentations of torus links, split links and composite links. We also have results explaining how to simplify plat presentations of split links.

%Further, this work might have possible ramifications to the realm of computational topology, because we suspect that with the aid of the flip move, we might be able to produce an algorithm to detect the unknot.

%In addition to that, we are working to answer some questions about very specific cases of the double coset problem in the braid group.

\bibliography{bib}{}
\bibliographystyle{alpha}
\addcontentsline{toc}{section}{References}

\end{document}